\numberwithin{equation}{section}
\newcommand {\eq} [1] {\begin{equation}\label{#1}}
\newcommand {\en} {\end{equation}}
\newcommand {\eqn}  {\begin{eqnarray}}
\newcommand {\enn} {\end{eqnarray}}
\newcommand {\bstar}  {\begin{eqnarray*}}
\newcommand {\estar} {\end{eqnarray*}}
\newcommand {\setC}  {{\mathbb C}}
\newcommand {\mat} [1] {\left[\begin{array}{#1}}
\newcommand {\rix}     {\end{array}\right]}
\newtheorem{Theorem}{Theorem}[section]
\newtheorem{Lemma}{Lemma}[section]
\newtheorem{Corollary}{Corollary}[section]
\newtheorem{Remark}{{\it Remark}}[section]
\newcommand {\wh} {\widehat}
\newcommand {\wt} {\widetilde}
\newcommand {\proof} {\par{\it Proof}. \ignorespaces}
\newcommand {\eproof}
      {\space
 {\ \vbox{\hrule\hbox{\vrule height1.3ex\hskip0.8ex\vrule}\hrule}}
 \par}
\newcommand {\diag}  {\mathop{\rm diag}\nolimits}
\title{Combined perturbation bounds for eigenstructure of Hermitian matrices and singular structure of general matrices\thanks{This work is supported by National Natural Science Foundations of China (11771159) and Guangdong Provincial Natural Science Foundation (2022A1515011123)}}
\author{Xiao Shan Chen\footnotemark[1]
\and Hongguo Xu\footnotemark[2]}
\begin{document}
\maketitle
\renewcommand{\thefootnote}{\fnsymbol{footnote}}
\footnotetext[1]
{School of Mathematics, South China
Normal University, Guangzhou, 510631, People's Republic of China. \texttt{chenxs33@163.com}.
This work is supported by National Natural Science Foundations of China (11771159, 11671158)  and
and Guangdong Provincial Natural Science Foundation (2022A1515011123).
}

\footnotetext[2]{ Department of Mathematics, University of Kansas, Lawrence, KS 66045, USA.
\texttt{feng@ku.edu}. }
\renewcommand{\thefootnote}{\arabic{footnote}}

\begin{abstract}
Combined perturbation bounds are presented for eigenvalues and eigenspaces
of Hermitian matrices or singular values and singular subspaces of  general matrices.
The bounds are derived based on the smooth decompositions and elementary calculus techniques.
\end{abstract}

\noindent
{\bf Keywords}
combined perturbation bound, eigenvalue, eigenspace,  singular value, singular subspace, Frobenius norm

\noindent
{\bf AMS subject classification}
65F15, 65F99

\section{Introduction} Throughout this paper the symbol $\mathbb{C}^{m\times n}$ ($\mathbb{R}^{m\times n}$) denotes the set of complex (real) $m\times n$ matrices. $A^H (A^T)$ is the conjugate transpose (transpose) of a matrix $A$. The $(i,j)$ entry of a matrix $A$ is denoted by $A_{ij}$. The set of singular values of a matrix $A$ is denoted by $\sigma(A)$. For a square matrix $A$, the spectrum of $A$ is denoted by $\lambda(A)$.
For an $n\times n$ Hermitian matrix $A$, ${\rm Eig}^{\downarrow}(A)$ denotes an $n\times n$ diagonal matrix whose diagonal entries are the eigenvalues of $A$ in nonincreasing order. For an $m\times n (m\geq n)$ matrix $B$, ${\rm Sing}^{\downarrow}(B)$ denotes an $n\times n$ diagonal matrix whose diagonal entries are the singular values of $B$ in nonincreasing order.
$\mathcal R (B)$ denotes the range of the matrix $B$.
$I_n$ (or simply $I$) is the identity matrix of order $n$ and $e_i$ is its $i$th column. $\|\cdot\|_2$ denotes the spectral matrix norm and $\|\cdot\|_F$ the Frobenius norm.
We use the notation $\dot{F}(t)$ for $dF(t)/dt$, where $F(t)$ can be a time-dependent scalar, vector, or  matrix. For a complex number $z$, by $\overline{z}$, ${\rm Re}(z)$ and ${\rm Im}(z)$ we denote its conjugate, real and imaginary parts, respectively. Finally ${\imath} =\sqrt{-1}$.
\vskip 0.1in
Perturbation theory about the eigenvalues and eigenspaces of Hermitian matrices,
and the singular values and singular subspaces of general matrices has been well established, and many results have been published; e.g., see \cite{dk70,golv13,hw53,kat80,li07,s73,ss90,w72}. The purpose of this paper is to study  perturbation bounds in a combined form for a Hermitian matrix by introducing a single real parameter to the perturbation matrix and considering a (general) spectral  factorization as an analytic form.
Another purpose is to apply the same technique to the singular value decomposition (SVD)
and establish the same type of results.

Let $A$ and $\wt A=A+\Delta A$ be two $n\times n$ Hermitian matrices. Then one has the
Hoffman--Wielandt type eigenvalue bound (\cite{bh96,golv13,ss90})
\begin{eqnarray}\label{c-121}
\|{\rm Eig}^{\downarrow}(\wt A)-{\rm Eig}^{\downarrow}(A)\|_F\leq \|\Delta A\|_F.
\end{eqnarray}
Let $\mathcal{R}(U_1)$ and $\mathcal{R}(\wt U_1)$ be $r$-dimensional eigenspaces of $A$ and $\wt A$, respectively, where $U_1^HU_1=\wt U_1^H\wt U_1=I_r$.
The canonical angle between $\mathcal{R}(U_1)$ and $\mathcal{R}(\wt U_1)$ is defined by
$$
\Theta(U_1,\wt U_1)=\arccos(U_1^H\wt U_1\wt U_1^HU_1)^{1/2}.
$$
Under the condition
\[
\delta_{12}=\min_{\lambda\in\lambda(U_1^HAU_1), \wt\lambda\in\lambda(\wt A)/\lambda(\wt U_1^H\wt A\wt U_1)}\{|\lambda-\wt \lambda|\}>0,
\]
Davis and Kahan \cite{dk70} provided the following classical perturbation bound for eigenspaces of Hermitian matrices
\begin{eqnarray}\label{c-122}
\|\sin\Theta(U_1,\wt U_1)\|_F\leq \frac{1}{\delta_{12}}\|\Delta A U_1\|_F.
\end{eqnarray}
In \cite{li07}, Li and Sun  obtained perturbation bounds
in a combined form for eigenspaces and the corresponding eigenvalues.
One of the bounds 
 (\cite[Theorem 2.2]{li07}) is
\begin{eqnarray}\label{c-13}
&&\quad(1-\|\sin\Theta(U_1,\wt U_1)\|_2^2)\|{\rm Eig}^{\downarrow}(\wt U_1^H \wt A \wt U_1)-{\rm Eig}^{\downarrow}(U_1^HAU_1)\|_F^2\nonumber\\
&&\qquad\qquad\qquad\qquad \qquad\quad +\delta^2_{12}\|\sin\Theta(U_1,\wt U_1)\|_F^2
\leq \|\Delta A U_1\|_F^2.
\end{eqnarray}
The bound (\ref{c-13}) is sharper than (\ref{c-122}). When $r=n$, one has
 $\sin\Theta(U_1, \wt U_1)=0$ and the inequality (\ref{c-13}) reduces to (\ref{c-121}). Similar perturbation bounds have been established for singular values and (left and right) singular subspaces of a general matrix   (\cite{li98a,li98, li07,ss90,w72}).  \vskip 0.1in

In this paper we will provide same types of combined bounds for the eigenvalues and
eigenspaces of a Hermitian matrix and the singular values and singular subspaces of
a general matrix.  The contributions of the work  can be summarized as follows, which
is for Hermitian eigenvalue problem only. It is similar for the SVD results.
\begin{enumerate}
\item[(a)] The techniques involved in \cite{bunbmn91,de99, kat80}  are essentially elementary calculus. This is different from ones in \cite{li07}, where advanced inequalities are employed;
\item[(b)] We derive novel local bounds for perturbation of eigenvalues and several eigenspaces and for one eigenspace and its corresponding eigenvalues;

\item[(c)] For measuring  perturbation of eigenspaces, instead of using the canonical angle we use the distance between two orthonormal basis matrices. As a consequence, the derived bound essentially implies a bound (\ref{c-13}), so it is potentially sharper.
\end{enumerate}\vskip 2mm

The rest of this paper is organized as follows. In Section 2 we present combined perturbation bounds for eigenspaces and corresponding eigenvalues of a Hermitian  matrix.
In Section 3 we derive combined perturbation bounds for singular subspaces and corresponding singular values of a general matrix. Section 4 contains our conclusions.

\section{Combined bounds of eigenvalues and eigenspaces}
In this section we derive combined perturbation bounds for eigenspaces and corresponding eigenvalues of a Hermitian matrix. The following result is essential for deriving our main results.\vskip 2mm

\begin{Lemma}~\label{lem21}
Suppose $U(t)=[U_1(t), \ldots, U_k(t)]$ is an $n\times n$ unitary analytic time-dependent matrix of a real variable $t$, where
 $$ U_j(t)\in\mathbb{C}^{n\times r_j},\quad j=1,\ldots,k,\quad\mbox{and} \quad r_1+\cdots+r_k=n.$$
 Then for any given skew Hermitian analytic time-dependent matrices $\Phi_j(t)\in\mathbb{C}^{r_j\times r_j}, j=1,\ldots,k$, there exist unitary analytic time-dependent matrices $P_j(t)\in\mathbb{C}^{r_j\times r_j}$,
$j=1,\ldots,k$, such that for $\wh U_j(t)=U_j(t)P_j(t)$ one has $\wh U_j(t)^H \dot {\wh U}_j(t)=\Phi_j(t)$ for $j=1,\ldots,k$.
\end{Lemma}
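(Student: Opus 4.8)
The plan is to reduce the claim, for each index $j$ separately (the $k$ blocks are handled independently, so I fix $j$ and suppress it where convenient), to a linear matrix differential equation for $P_j$. Write $\wh U_j = U_j P_j$. Since $U(t)$ is unitary, $U_j(t)^H U_j(t) = I_{r_j}$, and differentiating this identity shows that $A_j(t) := U_j(t)^H \dot U_j(t)$ is skew-Hermitian; it is analytic because $U$, hence $\dot U$, is analytic. A direct computation gives
\[
\wh U_j^H \dot{\wh U}_j = P_j^H\bigl(U_j^H \dot U_j\bigr) P_j + P_j^H \dot P_j = P_j^H A_j P_j + P_j^H \dot P_j .
\]
Hence, if I can produce an analytic unitary $P_j$ solving the linear ODE $\dot P_j = P_j \Phi_j - A_j P_j$, then $P_j^H \dot P_j = P_j^H P_j \Phi_j - P_j^H A_j P_j = \Phi_j - P_j^H A_j P_j$, and the display collapses to $\wh U_j^H \dot{\wh U}_j = \Phi_j$, as required.

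So the task becomes: produce an analytic unitary solution of $\dot P_j = P_j\Phi_j - A_j P_j$. First I would solve this ODE with the initial condition $P_j(t_0) = I_{r_j}$ at some (arbitrary) point $t_0$ of the underlying interval. Vectorizing turns it into a linear system $\dot x = C(t)x$ with $C(t) = \Phi_j(t)^T \otimes I - I \otimes A_j(t)$ analytic, so by the classical theory of linear ODEs there is a unique solution on the whole interval, and that solution is analytic (its power series about any point has radius of convergence at least that of the coefficients; alternatively the Peano--Baker/Picard iterates are analytic and converge locally uniformly). It then remains to check that the solution is unitary for all $t$. For this I set $Q_j := P_j^H P_j$ and compute, using skew-Hermiticity of $A_j$ and $\Phi_j$,
\[
\dot Q_j = \dot P_j^H P_j + P_j^H \dot P_j = Q_j \Phi_j - \Phi_j Q_j ,
\]
again a linear ODE, now with $Q_j(t_0) = I_{r_j}$. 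Since $Q_j \equiv I_{r_j}$ is visibly a solution, uniqueness forces $P_j(t)^H P_j(t) = I_{r_j}$ for all $t$; being square, $P_j(t)$ is unitary. Then $\wh U_j = U_j P_j$ is analytic with orthonormal columns and, by the reduction above, satisfies $\wh U_j^H \dot{\wh U}_j = \Phi_j$ for each $j$.

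The only step that is more than bookkeeping is the analyticity --- as opposed to mere $C^\infty$ smoothness --- of the ODE solution, and I expect to dispatch it by quoting the standard theorem that a linear system with real-analytic coefficients on an interval admits real-analytic solutions there; this is precisely the elementary-calculus tool already used in references such as \cite{kat80}. It is worth noting that when all $\Phi_j \equiv 0$ the lemma is exactly Kato's construction of a non-rotating (``parallel-transported'') analytic frame adapted to an analytic family of subspaces, and the general skew-Hermitian $\Phi_j$ is a harmless modification of that argument.
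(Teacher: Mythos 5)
Your proof is correct, and it differs from the paper's in a way worth noting. The paper keeps the full block-diagonal factor $P(t)=\diag(P_1(t),\ldots,P_k(t))$, extracts from $\wh U^H\dot{\wh U}$ the block equations $\dot P_j = P_j\bigl[\Phi_j - P_j^H U_j^H\dot U_j P_j\bigr]$, and then simply cites \cite{de99}: since the bracket is skew-Hermitian, this (nonlinear) ODE with $P_j(0)=I$ has a unique solution that is unitary and analytic. You instead decouple the blocks and replace that nonlinear equation by the linear Sylvester-type ODE $\dot P_j = P_j\Phi_j - A_jP_j$ with $A_j=U_j^H\dot U_j$; along unitary matrices the two ODEs coincide (because $P_jP_j^H=I$), so you end up constructing the same correction, but your formulation buys you a self-contained argument: existence, uniqueness and analyticity come from the classical theory of linear systems with analytic coefficients, and unitarity follows from your observation that $Q_j=P_j^HP_j$ satisfies the linear ODE $\dot Q_j=Q_j\Phi_j-\Phi_jQ_j$ with $Q_j(t_0)=I$, which has $Q_j\equiv I$ as its unique solution. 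The paper's route is shorter because it delegates exactly this unitarity-and-analyticity step to the smooth-decomposition literature; yours is more elementary and avoids the external citation, at the cost of the extra $Q_j$ computation. Your remark that the case $\Phi_j\equiv 0$ (the one actually used in Lemma 2.2) is Kato-style parallel transport is also apt.
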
\vskip 2mm

\proof
Taking the derivative on both sides of $U(t)^HU(t)=I$ leads to
\[
U(t)^H\dot U(t)=-\dot U(t)^H U(t)=-(U(t)^H\dot U(t))^H.
\]
Hence  $U_j(t)^H\dot U_j(t)$ is a skew Hermitian matrix for $j=1,\ldots, k.$\vskip 0.1in
Let $P(t)={\rm diag}(P_1(t),\ldots,P_{k}(t))$ and  $\wh U(t)=U(t)P(t)$. Then
\begin{eqnarray*}
\wh U(t)^H\dot{\wh U}(t)&=&P(t)^HU(t)^H\left[\dot U(t) P(t)+U(t)\dot P(t)\right]\\
&=&P(t)^HU(t)^H\dot U(t) P(t)+P(t)^H\dot P(t),
\end{eqnarray*}
which can be written as
$$
\dot P(t)=P(t)\left[\wh U(t)^H\dot{\wh U}(t)-P(t)^H U(t)^H\dot U(t) P(t)\right].
$$
By comparing the $j$-th diagonal block on both sides of the above equation, $P_{j}(t)$ has to satisfy the differential equation:

\[\dot P_{j}(t)=P_{j}(t) \left[\Phi_{j}(t)-P_j(t)^H U_j(t)^H\dot U_j(t) P_j(t)\right] ,\quad j=1,\ldots,k.
\]
Since $\Phi_{j}(t)-P_j(t)^HU_j(t)^H\dot U_j(t)P_j(t)$ is skew Hermitian as a sum of skew Hermitian matrices with the
initial condition $P_j(0)=I$, the differential equation has unique solution that is unitary
and analytic \cite{de99}. This shows the existence of $P_j(t), j=1,\ldots, k.$
\eproof
\medskip

Let $A(t)=A+t\Delta A\in\mathbb{C}^{n\times n}$ be a Hermitian matrix with $t\in{\mathbb R}$. Then
it is known (\cite{gohlr09,kat80,rel37,rel69}) that $A(t)$ has an analytic decomposition
\begin{eqnarray}\label{spectral}
A(t)=U(t)\Lambda(t)U(t)^H,
\end{eqnarray}
where
\begin{eqnarray}\label{bku}
U(t)=\left[U_1(t),\ldots,U_k(t)\right],\qquad U_j(t)\in\setC^{n\times r_j}, \quad j=1,\ldots,k,
\end{eqnarray}
is unitary and analytic,
and
\begin{eqnarray}\label{bkl}
\Lambda(t)=\diag(\Lambda_1(t),\ldots,\Lambda_k(t)),\qquad
\Lambda_j(t)\in{\setC}^{r_j\times r_j},\quad j=1,\ldots,k
\end{eqnarray}
is analytic.  Note that $\Lambda_1(t),\ldots,\Lambda_k(t)$ may or may not be diagonal.

The analytic decomposition (\ref{spectral}) can be considered as a generalized spectral decomposition and it is not unique. We have the following
results with a special choice of $U(t)$.\vskip 2mm

\begin{Lemma}\label{lem22}
Let $A(t)=A+t\Delta A\in\mathbb{C}^{n\times n}$ be a Hermitian matrix with $t\in{\mathbb R}$. Then
$A(t)$ has the analytic decomposition (\ref{spectral}) with
$U(t)$ and $\Lambda(t)$ in the block forms (\ref{bku}) and (\ref{bkl}), and $U(t)$
satisfies
\begin{eqnarray}\label{c-2-3}
U_j(t)^H\dot U_j(t)=0,\quad j=1,\ldots,k.
\end{eqnarray}
\end{Lemma}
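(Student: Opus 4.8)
The plan is to take the generalized spectral decomposition (\ref{spectral}) guaranteed by the analytic perturbation theory of Rellich and others, with $U(t)=[U_1(t),\ldots,U_k(t)]$ unitary analytic and $\Lambda(t)=\diag(\Lambda_1(t),\ldots,\Lambda_k(t))$ analytic, and then \emph{adjust} it block-by-block using Lemma~\ref{lem21} so that the extra condition (\ref{c-2-3}) holds while the decomposition is preserved. First I would observe that for a starting decomposition, each $U_j(t)^H\dot U_j(t)$ is skew Hermitian (this is exactly the identity established at the top of the proof of Lemma~\ref{lem21}). Now apply Lemma~\ref{lem21} with the particular choice $\Phi_j(t)\equiv 0$ for $j=1,\ldots,k$: this yields unitary analytic $P_j(t)\in\setC^{r_j\times r_j}$ such that $\wh U_j(t):=U_j(t)P_j(t)$ satisfies $\wh U_j(t)^H\dot{\wh U}_j(t)=0$ for all $j$.

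The remaining step is to check that replacing $U(t)$ by $\wh U(t)=U(t)P(t)$, with $P(t)=\diag(P_1(t),\ldots,P_k(t))$, still gives an analytic decomposition of the required block form. Unitarity and analyticity of $\wh U(t)$ are immediate since $P(t)$ is block-diagonal, unitary, and analytic. For the decomposition itself, I would compute
\[
\wh U(t)\,\wh\Lambda(t)\,\wh U(t)^H = U(t)P(t)\,\wh\Lambda(t)\,P(t)^H U(t)^H,
\]
so it suffices to define $\wh\Lambda_j(t):=P_j(t)^H\Lambda_j(t)P_j(t)$ and $\wh\Lambda(t):=\diag(\wh\Lambda_1(t),\ldots,\wh\Lambda_k(t))$; then $P(t)\wh\Lambda(t)P(t)^H=\Lambda(t)$ because $P(t)$ is block diagonal conformally with $\Lambda(t)$, and hence $\wh U(t)\wh\Lambda(t)\wh U(t)^H = U(t)\Lambda(t)U(t)^H = A(t)$. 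Each $\wh\Lambda_j(t)$ is analytic as a product of analytic matrices, and $\wh\Lambda(t)$ has the block-diagonal form (\ref{bkl}) with the same block sizes $r_j$. Thus $A(t)=\wh U(t)\wh\Lambda(t)\wh U(t)^H$ is an analytic decomposition of the form (\ref{spectral})--(\ref{bkl}) with $\wh U_j(t)^H\dot{\wh U}_j(t)=0$, which is the assertion after renaming $\wh U,\wh\Lambda$ back to $U,\Lambda$.

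I do not anticipate a genuine obstacle here: the lemma is essentially a corollary of Lemma~\ref{lem21} once one makes the right choice of the target matrices $\Phi_j$. The only point that requires a word of care is that the block diagonal structure of $\Lambda(t)$ must be \emph{conformal} with the block partition of $U(t)$, so that conjugating by the block-diagonal unitary $P(t)$ permutes back to $\Lambda(t)$ and leaves $A(t)$ unchanged; this is why one applies Lemma~\ref{lem21} separately to each block rather than to a single $n\times n$ rotation. One should also note explicitly that, as with (\ref{spectral}), the resulting decomposition is still not unique --- only that \emph{a} decomposition satisfying (\ref{c-2-3}) exists --- which is all the statement claims.
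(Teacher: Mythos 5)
Your proof is correct and follows essentially the same route as the paper: start from an analytic decomposition (\ref{spectral}), apply Lemma~\ref{lem21} with $\Phi_j(t)\equiv 0$ to get block-diagonal unitary analytic $P(t)$, and replace $U(t)$, $\Lambda_j(t)$ by $U(t)P(t)$, $P_j(t)^H\Lambda_j(t)P_j(t)$. The extra verification you include (that the conjugated block-diagonal $\wh\Lambda(t)$ still reproduces $A(t)$ and retains the form (\ref{bkl})) is exactly what the paper asserts, just spelled out in more detail.
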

\begin{proof}
Let $A(t)$ have an analytic decomposition (\ref{spectral}). Then for
arbitrary unitary analytic  matrices  $P_j(t)\in{\setC}^{r_j\times r_j}$, $j=1,\ldots,k$,
we have
\[
A(t)=\wh U(t)\wh\Lambda(t)\wh U(t)^H,
\]
where
\[
\wh U(t)=U(t)\diag(P_1(t),\ldots,P_k(t)),
\quad \wh \Lambda(t)=\diag(P_1(t)^H\Lambda_1(t)P_1(t),\ldots,
P_k(t)^H\Lambda_k(t)P_k(t)).
\]
By Lemma~\ref{lem21} with $\Phi_j(t)=zeros(r_j, r_j)$, $P_1(t),\ldots,P_k(t)$ can be selected such that
\[
\wh U_j(t)^H\dot {\wh U}_j(t)=0,\quad j=1,\dots,k.
\]
Hence $A(t)$ has an analytic  decomposition (\ref{spectral}) that satisfies   (\ref{c-2-3}).
\end{proof}

\medskip

We have  the following perturbation result of the eigenspaces ${\mathcal R}(U_j(0))$,
$j=1,\ldots,k$, and the eigenvalues of $A=A(0)$.\vskip 2mm

\begin{Theorem}\label{them1}
Let $A(t)=A+t\Delta A\in\mathbb{C}^{n\times n}$ be a Hermitian matrix with $t\in{\mathbb R}$.
Suppose $A(t)$ has the analytic decomposition (\ref{spectral}) with $U(t)$ satisfying
(\ref{bku}) and (\ref{c-2-3}) and $\Lambda(t)$ in the block diagonal form (\ref{bkl}).
Define
\begin{eqnarray}\label{c-2-1}
\delta_{ji}(t)=\min_{\lambda_1(t)\in\lambda(\Lambda_j(t)),~
\lambda_2(t)\in\lambda(\Lambda_i(t))}\left\{\left|\lambda_1(t)-\lambda_2(t)\right|\right\},\quad
\delta_j(t)=\min_{i\ne j}\left\{\delta_{ji}(t)\right\},
\end{eqnarray}
and
\begin{eqnarray}\label{c-2-2}
\delta_{j,\min}=\min_{0\le t\le 1}\delta_j(t),
\end{eqnarray}
for $j=1,\ldots,k$. Denote $\wt A=: A(1)=A+\Delta A$ and
\begin{eqnarray*}
&&U(0)=\left[U_1(0),\ldots, U_k(0)]=:[U_1, \ldots, U_k\right]=U,\\
&&U(1)=\left[U_1(1),\ldots, U_k(1)\right]=:[\wt U_1,\ldots, \wt U_k ]=\wt U. 
\end{eqnarray*}
Then
\begin{eqnarray}\label{c-2-4}
\|{\rm Eig}^{\downarrow}(\wt A)-{\rm Eig}^{\downarrow}(A)\|_F^2+\sum_{j=1}^k
\delta_{j,\min}^2\|\wt U_j-U_j\|_F^2\le
\|{\Delta A}\|_F^2.
\end{eqnarray}
\end{Theorem}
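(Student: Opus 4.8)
The plan is to differentiate the analytic spectral decomposition~(\ref{spectral}) and integrate a suitable quadratic identity over $t\in[0,1]$. Writing $A(t)=U(t)\Lambda(t)U(t)^H$ and using $\dot A(t)=\Delta A$, I would first compute
\[
U(t)^H\Delta A\, U(t)=U(t)^H\dot U(t)\,\Lambda(t)-\Lambda(t)\,U(t)^H\dot U(t)+\dot\Lambda(t),
\]
abbreviating $H(t):=U(t)^H\Delta A\,U(t)$ and $S(t):=U(t)^H\dot U(t)$ (skew Hermitian). The key structural consequence of the normalization~(\ref{c-2-3}) from Lemma~\ref{lem22} is that the diagonal blocks $S_{jj}(t)=U_j(t)^H\dot U_j(t)$ vanish; hence the $j$-th diagonal block of the identity reads $H_{jj}(t)=\dot\Lambda_j(t)$, while the $(j,i)$ off-diagonal block ($i\ne j$) reads $H_{ji}(t)=S_{ji}(t)\Lambda_i(t)-\Lambda_j(t)S_{ji}(t)$. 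Since $\|\Delta A\|_F^2=\|H(t)\|_F^2=\sum_j\|\dot\Lambda_j(t)\|_F^2+\sum_{i\ne j}\|H_{ji}(t)\|_F^2$ is constant in $t$, I get a pointwise decomposition of $\|\Delta A\|_F^2$ that I will bound block by block.

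Next I would handle the two groups of terms. For the eigenvalue part, integrating $\sum_j\|\dot\Lambda_j(t)\|_F^2$ is not immediately what is wanted — the left side of~(\ref{c-2-4}) involves sorted eigenvalues of $A(1)$ and $A(0)$, not $\int_0^1\sum_j\|\dot\Lambda_j\|_F^2\,dt$. The standard fix: the eigenvalues of $A(t)$ are the union of eigenvalues of the $\Lambda_j(t)$; each $\Lambda_j(t)$ is itself Hermitian analytic, so by the same reduction (or by a one-block version of the argument) its own eigenvalues evolve with $\|\text{(derivative of eigenvalue vector)}\|_2\le\|\dot\Lambda_j(t)\|_F$, and a Cauchy--Schwarz in $t$ together with the rearrangement/Hoffman--Wielandt inequality~(\ref{c-121}) gives $\|{\rm Eig}^{\downarrow}(\wt A)-{\rm Eig}^{\downarrow}(A)\|_F^2\le\int_0^1\sum_j\|\dot\Lambda_j(t)\|_F^2\,dt$. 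For the eigenspace part, from $H_{ji}(t)=S_{ji}(t)\Lambda_i(t)-\Lambda_j(t)S_{ji}(t)$ and the separation~(\ref{c-2-1}) one has the Sylvester-type lower bound $\|H_{ji}(t)\|_F\ge\delta_{ji}(t)\|S_{ji}(t)\|_F\ge\delta_j(t)\|S_{ji}(t)\|_F$; therefore $\sum_{i\ne j}\|H_{ji}(t)\|_F^2\ge\delta_j(t)^2\sum_{i\ne j}\|S_{ji}(t)\|_F^2=\delta_j(t)^2\|\dot U_j(t)\|_F^2$, the last equality because $\dot U_j(t)=U(t)S(t)e_{(j)}$ has Frobenius norm squared equal to $\sum_i\|S_{ij}(t)\|_F^2=\sum_{i\ne j}\|S_{ij}(t)\|_F^2=\sum_{i\ne j}\|S_{ji}(t)\|_F^2$ by skew-Hermitian symmetry. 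Using $\delta_j(t)\ge\delta_{j,\min}$ from~(\ref{c-2-2}) and integrating, $\int_0^1\sum_{i\ne j}\|H_{ji}(t)\|_F^2\,dt\ge\sum_j\delta_{j,\min}^2\int_0^1\|\dot U_j(t)\|_F^2\,dt$. Finally a Cauchy--Schwarz (Jensen) inequality bounds $\|\wt U_j-U_j\|_F^2=\|\int_0^1\dot U_j(t)\,dt\|_F^2\le\int_0^1\|\dot U_j(t)\|_F^2\,dt$.

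Assembling: $\|\Delta A\|_F^2=\int_0^1\|\Delta A\|_F^2\,dt=\int_0^1\Big(\sum_j\|\dot\Lambda_j(t)\|_F^2+\sum_{i\ne j}\|H_{ji}(t)\|_F^2\Big)dt\ge\|{\rm Eig}^{\downarrow}(\wt A)-{\rm Eig}^{\downarrow}(A)\|_F^2+\sum_j\delta_{j,\min}^2\|\wt U_j-U_j\|_F^2$, which is~(\ref{c-2-4}). The main obstacle I anticipate is the eigenvalue term: reconciling the \emph{global} sorted-eigenvalue difference on the left with the \emph{integrated} local quantity $\int_0^1\sum_j\|\dot\Lambda_j\|_F^2\,dt$ requires care, since within each $\Lambda_j(t)$ the analytic eigenvalue branches need not be sorted and may cross, and across different blocks the eigenvalues must be re-sorted at the endpoints; the clean way is to invoke~(\ref{c-121}) for the endpoint comparison after bounding the endpoint eigenvalue vectors' distance by the integral of the speed of the (unsorted) analytic eigenvalue curves, using that permutation does not increase $\|\cdot\|_F$ and that the $\Lambda_j(t)$ being Hermitian analytic have $\ell_2$-speed of their eigenvalues at most $\|\dot\Lambda_j(t)\|_F$. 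A secondary point to check is that Lemma~\ref{lem22} genuinely applies here, i.e. that $A(t)=A+t\Delta A$ is analytic Hermitian so that the decomposition~(\ref{spectral}) with~(\ref{c-2-3}) exists on all of $[0,1]$.
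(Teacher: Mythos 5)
Your proposal is correct and follows essentially the same route as the paper's proof: differentiate the decomposition, use the normalization (\ref{c-2-3}) to split $U(t)^H\Delta A\,U(t)$ into $\dot\Lambda_j(t)$ on the diagonal blocks and a Sylvester-type relation off the diagonal, bound the latter below by $\delta_{ji}(t)^2\|U_i(t)^H\dot U_j(t)\|_F^2$ after diagonalizing each $\Lambda_j(t)$, and integrate with Cauchy--Schwarz in $t$. The ``main obstacle'' you anticipate about crossing and re-sorting eigenvalue branches does not arise in the paper's treatment (and can be avoided in yours): one never tracks individual eigenvalue curves, but simply uses $\|\Lambda(1)-\Lambda(0)\|_F^2=\bigl\|\int_0^1\dot\Lambda(t)\,{\rm d}t\bigr\|_F^2\le\int_0^1\|\dot\Lambda(t)\|_F^2\,{\rm d}t$ at the matrix level and then applies the Hoffman--Wielandt bound (\ref{c-121}) to the Hermitian matrices $\Lambda(0)$ and $\Lambda(1)$, whose spectra are those of $A$ and $\wt A$.
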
\vskip 2mm

\begin{proof}
By taking derivatives on both sides of (\ref{spectral}) and $U^H(t)U(t)=I,$ we have
\begin{eqnarray}\label{dspectral}
\Delta A=\dot{U}(t)\Lambda(t)U(t)^H+U(t)\dot{\Lambda}(t)U(t)^H+U(t)\Lambda(t)\dot{U}(t)^H
\end{eqnarray}
and
\begin{eqnarray}\label{dunitary}
\dot{U}(t)^H U(t)=-U(t)^H \dot{U}(t).
\end{eqnarray}
Multiplying both sides of (\ref{dspectral}) with $U(t)^H$ on the left and $U(t)$ on the right,
and using  (\ref{dunitary}), we obtain
\begin{eqnarray}\label{dspectral-2}
U(t)^H \Delta A U(t) =\dot{\Lambda}(t)+U(t)^H \dot{U}(t)\Lambda(t)-\Lambda(t) U(t)^H \dot{U}(t).
\end{eqnarray}
Let
\[
U_i(t)^H\Delta AU_j(t)=:[\Delta A_{ij}(t)],\quad i,j=1,\ldots,k.
\]
By comparing the blocks of (\ref{dspectral-2}) and using (\ref{c-2-3}), one obtain its diagonal terms
\begin{eqnarray}\label{c-2-5}
\dot\Lambda_j(t)=\Delta A_{jj}(t),\quad j=1,\ldots, k,
\end{eqnarray}
and off-diagonal terms
\begin{eqnarray}\label{c-2-6}
U_i(t)^H\dot U_j(t)\Lambda_j(t)-\Lambda_i(t)U_i(t)^H\dot U_j(t)=\Delta A_{ij}(t),
\qquad i\ne j.
\end{eqnarray}
Suppose that
\[
\Lambda_j(t)=G_j(t)\mat{ccc}\lambda_{j,1}(t)&&\\&\ddots&\\&&\lambda_{j,r_j}(t)\rix
G_j(t)^H,\quad j=1,\ldots,k,
\]
are  spectral decompositions (not necessarily analytic),
where $G_1(t),\ldots,G_k(t)$
are unitary matrices. Multiplying both sides of (\ref{c-2-6}) with $G_i(t)^H$ on the left and  $G_j(t)$ on the right yields
\begin{eqnarray}\label{c-2-7}
X_{ij}(t)\mat{ccc}\lambda_{j,1}(t)&&\\&\ddots&\\&&\lambda_{j,r_j}(t)\rix-
\mat{ccc}\lambda_{i,1}(t)&&\\&\ddots&\\&&\lambda_{i,r_i}(t)\rix X_{ij}(t)
=Z_{ij}(t),
\end{eqnarray}
where $X_{ij}(t)=G_i(t)^HU_i(t)^H\dot U_j(t)G_j(t)$
and $Z_{ij}(t)=G_i(t)^H\Delta A_{ij}(t)G_j(t)$. Let
$$X_{ij}(t)=[x^{(ij)}_{pq}(t)],\quad   Z_{ij}(t)=[z_{pq}^{(ij)}(t)].$$
 From (\ref{c-2-7}) one has
\begin{eqnarray*}
z^{(ij)}_{pq}(t)=(\lambda_{j,q}(t)-\lambda_{i,p}(t))x_{pq}^{(ij)}(t), \quad p=1,\ldots,r_i,\quad
 q=1,\ldots,r_j.
\end{eqnarray*}
Then for $\delta_{ji}(t)$ defined in (\ref{c-2-1}), one has
\begin{eqnarray}\label{c-2-8}
\|{\Delta A_{ij}(t)}\|_F^2&=&
\|{Z_{ij}(t)}\|_F^2=\sum_{p=1}^{r_i}\sum_{q=1}^{r_j}|z_{pq}^{(ij)}(t)|^2
=\sum_{p=1}^{r_i}\sum_{q=1}^{r_j}|\lambda_{j,q}(t)-\lambda_{i,p}(t)|^2|x_{pq}^{(ij)}(t)|^2
 \nonumber\\
&\ge& \delta_{ji}(t)^2\sum_{p=1}^{r_i}\sum_{q=1}^{r_j}|x_{pq}^{(ij)}(t)|^2
=\delta_{ji}(t)^2\|{U_i(t)^H\dot U_j(t)}\|_F^2.
\end{eqnarray}
By (\ref{c-2-3}),  we have
\begin{eqnarray}\label{c-2-9}
\|{\dot U_j(t)}\|_F^2=\|{U(t)^H\dot U_j(t)}\|_F^2=\sum_{i=1,i\ne j}^k\|{U_i(t)^H\dot U_j(t)}\|_F^2.
\end{eqnarray}
Then we get
\begin{eqnarray}\label{c-2-10}
\|{\Delta A}\|_F^2&=&\|{U(t)^H\Delta AU(t)}\|_F^2=\sum_{j=1}^k\|{\Delta A_{jj}(t)}\|_F^2
+\sum_{i\ne j}\|{\Delta A_{ij}(t)}\|_F^2\nonumber\\
&\ge &\sum_{j=1}^k\|{\dot\Lambda_j(t)}\|_F^2
+\sum_{j=1}^k\sum_{i=1,i\ne j}^k\delta_{ji}(t)^2\|{U_i(t)^H\dot U_j(t)}\|_F^2   \qquad (\mbox{by}\; (\ref{c-2-5})\; \mbox{and}\; (\ref{c-2-8}) )\nonumber\\
&\ge &\|{\dot\Lambda(t)}\|_F^2+\sum_{j=1}^k\delta_j(t)^2\sum_{i=1,i\ne j}^k\|{U_i(t)^H\dot U_j(t)}\|_F^2  \qquad (\mbox{by}\; (\ref{c-2-1}))\nonumber\\
&=&\|{\dot\Lambda(t)}\|_F^2
+\sum_{j=1}^k\delta_j(t)^2\|{\dot U_j(t)}\|_F^2\qquad (\mbox{by}\; (\ref{c-2-9}))
\nonumber\\
&\ge& \|{\dot\Lambda(t)}\|_F^2+\sum_{j=1}^k\delta_{j,\min}^2\|{\dot U_j(t)}\|_F^2.\qquad (\mbox{by}\; (\ref{c-2-2}))
\end{eqnarray}
Using the fact that
\[
\left|\int_0^1f(t){\rm d}t\right|^2\leq\int_0^1|f(t)|^2{\rm d}t,\quad \forall f(t),
\]
and together with (\ref{c-2-10}), we get
\begin{eqnarray}\label{c-2-11}
\|{\Delta A}\|_F^2&=&\int_0^1\|{\Delta A}\|_F^2{\rm d}t\geq
\int_0^1\|{\dot\Lambda(t)}\|_F^2{\rm d}t
+\sum_{j=1}^k\delta_{j,\min}^2\int_0^1\|{\dot U_j(t)}\|_F^2{\rm d}t\nonumber\\
&\geq&
\left\|\int_0^1\dot\Lambda(t){\rm d}t\right\|_F^2
+\sum_{j=1}^k\delta_{j,\min}^2\left\|\int_0^1\dot U_j(t){\rm d}t\right\|_F^2\nonumber\\
&=&\|{\Lambda(1)-\Lambda(0)}\|_F^2+\sum_{j=1}^k\delta_{j,\min}^2\|{U_j(1)-U_j(0)}\|_F^2
\nonumber\\
&=&\|{\Lambda(1)-\Lambda(0)}\|_F^2+\sum_{j=1}^k\delta_{j,\min}^2\|{\wt U_j-U_j}\|_F^2.
\end{eqnarray}
By (\ref{c-2-11}) and the inequality
\[
\|{\rm Eig}^{\downarrow}(\wt A)-{\rm Eig}^{\downarrow}(A)\|_F=\|{\rm Eig}^{\downarrow}(\Lambda(1))-{\rm Eig}^{\downarrow}(\Lambda(0))\|_F\leq\|{\Lambda(1)-\Lambda(0)}\|_F,
\]
which is from (\ref{c-121}),  we obtain (\ref{c-2-4}).
\end{proof}

\medskip

\begin{Remark}\label{rem211}
The combined perturbation bound (\ref{c-2-4}) is sharper than  (\ref{c-121}). When $k=1$, (\ref{c-2-3}) implies  $\wt U=U$. In this case, (\ref{c-2-4}) reduces to (\ref{c-121}).
\end{Remark}

\medskip

\begin{Remark}\label{rem24}
In order to include the term  $\|\wt U_j-U_j\|_F^2$ in (\ref{c-2-4}), we need $\delta_{j,\min}>0$, for which a sufficient condition is
 $$
 2\|\Delta A\|_2<\delta_j(0),
 $$
where $\delta_j(0)$ is given by (\ref{c-2-1}). In fact, for any $\lambda_1(t)\in\lambda(\Lambda_j(t))$ and $\lambda_2(t)\in\cup_{i\ne j}\lambda(\Lambda_i(t))$
with $t\in (0,~1]$ ,  there exist (\cite[Chapter IV, Corollary 4.10]{ss90}) $\mu\in\lambda(\Lambda_j(0))$ and $\nu\in\cup_{i\ne j}\lambda(\Lambda_i(0))$
such that
$$
|\lambda_1(t)-\mu|\leq \|\Delta A\|_2, \quad |\lambda_2(t)-\nu|\leq\|\Delta A\|_2.
$$
Therefore,
\begin{eqnarray*}
|\lambda_1(t)-\lambda_2(t)|\geq |\mu-\nu|-|\lambda_1(t)-\mu|-|\lambda_2(t)-\nu|\geq \delta_j(0)-2\|\Delta A\|_2,
\end{eqnarray*}
which implies
$$
\delta_{j,\min}\geq \delta_j(0)-2\|\Delta A\|_2>0.
$$
\end{Remark}

\medskip

\begin{Remark}\label{rem22}
When $k=n$ and $r_1=\cdots=r_n=1$, (\ref{spectral}) is a spectral decomposition.
All $U_1,\ldots,U_n$ and $\wt U_1,\ldots,\wt U_n$
are eigenvectors and (\ref{c-2-4}) bounds the perturbations of all the eigenvalues and
eigenvectors of $A$ (with the assumption that $\delta_{j,\min}>0$ for $j=1,\ldots,n$).
In particular, (\ref{c-2-4}) implies
\[
\|{\rm Eig}^{\downarrow}(\wt A)-{\rm Eig}^{\downarrow}(A)\|_F^2+\delta_{\min}^2
\|{\wt U-U}\|_F^2\le \|{\Delta A}\|_F^2,
\]
where $\delta_{\min}=\min_{1\le j\le k}\{\delta_{j,\min}\}$. When $\delta_{\min}>0$,
it bounds the perturbations of all the eigenvalues and the entire unitary similarity matrix $U$
of $A$.
\end{Remark}

\medskip

\begin{Remark}\label{rem23}
In Theorem \ref{them1}, for each $j$, $\|\wt U_j-U_j\|_F$ measures the perturbation of the eigenspace $\mathcal R(U_j)$. Therefore,
the inequality (\ref{c-2-4}) actually bounds the perturbations of the eigenspaces $\mathcal R(U_1),\ldots,\mathcal R(U_k)$ and their corresponding eigenvalues.
The combined bound (\ref{c-13}) contains  perturbations of one eigenspace  and its corresponding eigenvalues.  Following the same notations given in Theorem \ref{them1} and applying (\ref{c-13}) to the eigenspaces $\mathcal{R}(U_1),\ldots,\mathcal{R}(U_k)$, we can get

\begin{eqnarray} \label{totalb}
\|{\rm Eig}^{\downarrow}(\wt A)-{\rm Eig}^{\downarrow}(A)\|_F^2
+\sum_{j=1}^k{\wt\delta}^2_{j}\,\|\sin\Theta(U_j,\wt U_j)\|_2^2
\le \|\Delta A\|_F^2,
\end{eqnarray}
where
\[
\wt \delta_{j}^2=:\delta_{j}^2-\|{\rm Eig}^{\downarrow}(\Lambda_j(1))-{\rm Eig}^{\downarrow}(\Lambda_j(0))\|_F^2,\qquad
\delta_{j}=\min_{\lambda_1\in\lambda(\Lambda_j)
, \lambda_2\in\cup_{i\ne j}\lambda(\wt\Lambda_i)}\{|\lambda_1-\lambda_2|\},
\]
for $j=1,2,\ldots,k$.
Since (\cite[Chapter I, Theorem 5.5]{ss90})
\[\|\sin\Theta(U_j,\wt U_j)\|_F^2=\sum_{i=1,i\ne j}^k\|U_i^H\wt U_j\|_F^2
\]
and
\begin{eqnarray}
\label{esin}
\|\wt U_j-U_j\|_F^2&=&\|U^H(\wt U_j-U_j)\|_F^2
=\|U_j^H\wt U_j-I\|_F^2+\sum_{i=1,i\ne j}^k\|U_i^H\wt U_j\|_F^2 \nonumber\\
&\ge&\sum_{i=1,i\ne j}^k\|U_i^H\wt U_j\|_F^2=\|\sin\Theta(U_j,\wt U_j)\|_F^2,
\end{eqnarray}
when $\delta_{j,\min}^2$ is sufficiently close to  $\wt \delta_{j}^2$ for all $j$,
(\ref{c-2-4}) implies (\ref{totalb}). Note when $\delta_{j,\min}>0$, one can verify that
$\delta_{j,\min}-\wt\delta _j=O(\|\Delta A\|_F)$ when $\|\Delta A\|_F$ is sufficiently small.
\end{Remark}

\medskip

The following result gives a combined perturbation bound for one eigenspace and its corresponding  eigenvalues of a Hermitian matrix, which is similar to (\ref{c-13}). Without loss of generality, we consider
$\mathcal R(U_1)$, where $U_1$ is defined in Theorem~\ref{them1}.

\medskip
\begin{Theorem}\label{them2}
Under the assumptions of Theorem \ref{them1}, if $\|\Delta A\|_2<\delta_{1,\min}$,  then we have
{\footnotesize\begin{eqnarray}\label{c-2-15}
\left(1-\frac{\|{\Delta A}\|_2}{\delta_{1,\min}}\right)^2\|{\rm Eig}^{\downarrow}(\wt \Lambda_1)-{\rm Eig}^{\downarrow}(\Lambda_1)\|_F^2+\left(\delta_{1,\min}-\|{\Delta A}\|_2\right)^2\|\wt U_1-U_1\|_F^2\leq \|\Delta AU_1\|_F^2,
\end{eqnarray}}
where $\wt \Lambda_1=\Lambda_1(1)$ and $\Lambda_1=\Lambda_1(0)$.
\end{Theorem}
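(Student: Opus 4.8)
The plan is to adapt the integration argument of Theorem~\ref{them1} but to track only the first block, so that we get a bound in terms of $\|\Delta A U_1\|_F$ rather than $\|\Delta A\|_F$. First I would restrict attention to the first block column of the identity (\ref{dspectral-2}): multiplying (\ref{dspectral}) on the left by $U(t)^H$ and on the right by $U_1(t)$ and using (\ref{dunitary}) gives
\begin{eqnarray*}
U(t)^H \Delta A U_1(t) = \dot\Lambda_{\cdot 1}(t) + U(t)^H\dot U(t)\Lambda_1(t) - \Lambda(t) U(t)^H\dot U_1(t),
\end{eqnarray*}
whose diagonal block (using (\ref{c-2-3})) is $\dot\Lambda_1(t)=\Delta A_{11}(t)$ and whose $i$-th off-diagonal block ($i\ne 1$) is $U_i(t)^H\dot U_1(t)\Lambda_1(t)-\Lambda_i(t)U_i(t)^H\dot U_1(t)=\Delta A_{i1}(t)$. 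Diagonalising $\Lambda_1$ and $\Lambda_i$ as in (\ref{c-2-7})–(\ref{c-2-8}) yields, exactly as there,
\[
\|\Delta A_{i1}(t)\|_F^2 \ge \delta_{1i}(t)^2\,\|U_i(t)^H\dot U_1(t)\|_F^2 \ge \delta_{1,\min}^2\,\|U_i(t)^H\dot U_1(t)\|_F^2,\quad i\ne 1,
\]
and by (\ref{c-2-3}) $\|\dot U_1(t)\|_F^2=\sum_{i\ne 1}\|U_i(t)^H\dot U_1(t)\|_F^2$. Hence
\begin{eqnarray}\label{plan-key}
\|\Delta A U_1(t)\|_F^2 = \|U(t)^H\Delta A U_1(t)\|_F^2 = \|\dot\Lambda_1(t)\|_F^2 + \sum_{i\ne 1}\|\Delta A_{i1}(t)\|_F^2 \ge \|\dot\Lambda_1(t)\|_F^2 + \delta_{1,\min}^2\,\|\dot U_1(t)\|_F^2.
\end{eqnarray}

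Next I would integrate. Since $\Delta A$ is constant, $\|\Delta A U_1(t)\|_F$ is \emph{not} constant in $t$, so unlike in Theorem~\ref{them1} I cannot simply replace the left side by its integral. Instead write $g(t):=\|\Delta A U_1(t)\|_F$; I would integrate (\ref{plan-key}) over $[0,1]$, apply $\left|\int_0^1 f\right|^2\le\int_0^1|f|^2$ to each term on the right (componentwise, as in (\ref{c-2-10})–(\ref{c-2-11})), and obtain
\[
\int_0^1 g(t)^2\,{\rm d}t \ \ge\ \|\Lambda_1(1)-\Lambda_1(0)\|_F^2 + \delta_{1,\min}^2\,\|\wt U_1-U_1\|_F^2.
\]
Then I need to bound $\int_0^1 g(t)^2\,{\rm d}t$ from above by $g(1)^2=\|\Delta A U_1\|_F^2$ up to the stated factors. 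The estimate is $g(t) = \|\Delta A U_1(1) + \Delta A(U_1(t)-U_1(1))\|_F \le g(1) + \|\Delta A\|_2\,\|U_1(t)-U_1(1)\|_F$, and since $\|U_1(t)-U_1(1)\|_F\le\int_t^1\|\dot U_1(s)\|_F\,{\rm d}s\le\int_0^1\|\dot U_1(s)\|_F\,{\rm d}s=:L$, we get $g(t)\le g(1)+\|\Delta A\|_2\,L$ for all $t\in[0,1]$, hence $\int_0^1 g(t)^2\,{\rm d}t\le (g(1)+\|\Delta A\|_2 L)^2$. Also by Cauchy–Schwarz $L^2\le\int_0^1\|\dot U_1(s)\|_F^2\,{\rm d}s$, and from the integrated inequality this last integral is $\le \delta_{1,\min}^{-2}\int_0^1 g(t)^2\,{\rm d}t$. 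Writing $I:=\int_0^1 g^2$, $E:=\|\Delta A\|_2$, $D:=\delta_{1,\min}$, we have $L\le \sqrt{I}/D$ and $I\le(g(1)+EL)^2\le (g(1)+E\sqrt{I}/D)^2$; taking square roots (all quantities nonnegative) gives $\sqrt I\le g(1)+(E/D)\sqrt I$, so $(1-E/D)\sqrt I\le g(1)$, i.e. $I\le g(1)^2/(1-E/D)^2$ since $E<D$. Combining,
\[
\|\Lambda_1(1)-\Lambda_1(0)\|_F^2 + D^2\|\wt U_1-U_1\|_F^2 \ \le\ I \ \le\ \frac{g(1)^2}{(1-E/D)^2},
\]
and multiplying through by $(1-E/D)^2 = (D-E)^2/D^2$ yields
\[
\left(1-\frac{E}{D}\right)^2\|\Lambda_1(1)-\Lambda_1(0)\|_F^2 + (D-E)^2\|\wt U_1-U_1\|_F^2 \ \le\ g(1)^2 = \|\Delta A U_1\|_F^2.
\]
Finally, replacing $\|\Lambda_1(1)-\Lambda_1(0)\|_F$ by the smaller $\|{\rm Eig}^{\downarrow}(\wt\Lambda_1)-{\rm Eig}^{\downarrow}(\Lambda_1)\|_F$ via (\ref{c-121}) gives (\ref{c-2-15}).

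The main obstacle is precisely the step that has no analogue in Theorem~\ref{them1}: because $U_1(t)$ moves, $g(t)=\|\Delta A U_1(t)\|_F$ is time-dependent, and one must close the loop between the upper bound $\int_0^1 g^2 \le (g(1)+\|\Delta A\|_2 L)^2$ and the fact that the arclength $L$ is itself controlled by $\int_0^1\|\dot U_1\|_F^2 \le \delta_{1,\min}^{-2}\int_0^1 g^2$. The self-referential inequality $I\le (g(1)+(E/D)\sqrt I)^2$ is what forces the factor $(1-\|\Delta A\|_2/\delta_{1,\min})$ and is the reason the hypothesis $\|\Delta A\|_2<\delta_{1,\min}$ is needed. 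Everything else is a direct specialisation of the block-comparison estimates already established in the proof of Theorem~\ref{them1}.
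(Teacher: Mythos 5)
Your argument is essentially the paper's proof: the same block-restricted differential inequality $\|\Delta A U_1(t)\|_F^2\ge\|\dot\Lambda_1(t)\|_F^2+\delta_1(t)^2\|\dot U_1(t)\|_F^2$, the same integration over $[0,1]$, and the same self-referential bootstrap that produces the factor $1-\|\Delta A\|_2/\delta_{1,\min}$ and explains the hypothesis $\|\Delta A\|_2<\delta_{1,\min}$. The only genuinely different detail is cosmetic: you control the drift of $U_1(t)$ through the arclength $L=\int_0^1\|\dot U_1(s)\|_F\,{\rm d}s$ together with Cauchy--Schwarz, whereas the paper works with the maximizing point $t^\star$ of $\|U_1(t)-U_1(0)\|_F$; both give the same constants.

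There is, however, one slip you must fix: you anchored the expansion at $t=1$ and then wrote $g(1)^2=\|\Delta A U_1\|_F^2$. In the notation of Theorem \ref{them1}, $U_1=U_1(0)$ and $\wt U_1=U_1(1)$, so $g(1)=\|\Delta A\,\wt U_1\|_F$, not $\|\Delta A U_1\|_F$; as literally written your argument proves the inequality with $\|\Delta A\,\wt U_1\|_F^2$ on the right-hand side, which is not the stated bound (\ref{c-2-15}). The repair is immediate and keeps all your estimates intact: anchor at $t=0$, i.e.\ write $g(t)\le g(0)+\|\Delta A\|_2\,\|U_1(t)-U_1(0)\|_F$ with $\|U_1(t)-U_1(0)\|_F\le\int_0^t\|\dot U_1(s)\|_F\,{\rm d}s\le L$, and run the same closure $\sqrt I\le g(0)+(\|\Delta A\|_2/\delta_{1,\min})\sqrt I$ to get $I\le g(0)^2/(1-\|\Delta A\|_2/\delta_{1,\min})^2$ with $g(0)=\|\Delta A U_1\|_F$; the final multiplication by $(1-\|\Delta A\|_2/\delta_{1,\min})^2$ and the Hoffman--Wielandt step for $\|{\rm Eig}^{\downarrow}(\wt\Lambda_1)-{\rm Eig}^{\downarrow}(\Lambda_1)\|_F\le\|\wt\Lambda_1-\Lambda_1\|_F$ then give exactly (\ref{c-2-15}), matching the paper's conclusion.
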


\medskip
\begin{proof} From (\ref{c-2-3}), (\ref{c-2-5}) and (\ref{c-2-8}),
it is easily seen that
\begin{eqnarray}
\nonumber
\|{\Delta AU_1(t)}\|_F^2&=&
\|{U(t)^H\Delta AU_1(t)}\|_F^2=\|{\Delta A_{11}(t)}\|_F^2+\sum_{i=2}^k\|{\Delta A_{i1}(t)}\|_F^2\\
\nonumber
&\geq&\|{\dot\Lambda_1(t)}\|_F^2+\sum_{i=2}^k\delta_{1i}(t)^2\|{U_i(t)^H\dot U_1(t)}\|_F^2\nonumber\\
&\ge &\|{\dot\Lambda_1(t)}\|_F^2
+\delta_1(t)^2\sum_{i=2}^k\|{U_i(t)^H\dot U_1(t)}\|_F^2\nonumber\\
&\ge& \|{\dot\Lambda_1(t)}\|_F^2+\delta_1(t)^2\|{U(t)^H\dot U_1(t)}\|_F^2
\nonumber\\
\label{1stc}
&=&\|{\dot\Lambda_1(t)}\|_F^2+\delta_1(t)^2\|{\dot U_1(t)}\|_F^2.
\end{eqnarray}
By taking integrals on both side of (\ref{1stc}) on the interval $[0,\,1]$, as before
one can derive
\begin{eqnarray}\label{c-2-16}
\|\wt\Lambda_1-\Lambda_1\|_F^2+\delta_{1,\min}^2\|{\wt U_1-U_1}\|_F^2
\leq\int_0^1\|{\Delta A U_1(t)}\|_F^2{\rm d}t.
\end{eqnarray}
Noting that $U_1=U_1(0)$, we have
\begin{eqnarray}\label{c-2-17}
\|{\Delta A U_1(t)}\|_F&=&
\|{\Delta AU_1+\Delta A(U_1(t)-U_1(0))}\|_F\le
 \|{\Delta AU_1}\|_F+\|{\Delta A}\|_2\|{U_1(t)-U_1(0)}\|_F\nonumber\\
&\le& \|{\Delta AU_1}\|_F+\|{\Delta A}\|_2\|{U_1(t^\star)-U_1(0)}\|_F,
\end{eqnarray}
where $t^\star\in [0,1]$ satisfies
\[
\|{U_1(t^\star)-U_1(0)}\|_F=\max_{0\le t\le 1}\|{U_1(t)-U_1(0)}\|_F,
\]
\noindent and from (\ref{1stc}),
\bstar
\delta_{1,\min}^2\|{U_1(t^\star)-U_1(0)}\|_F^2
&\le&\left(\min_{0\le t\le t^\star}\delta_1(t)^2\right)\left\|\int_0^{t^\star}\dot U_1(t){\rm d}t\right\|_F^2
\leq\int_0^{t^\star}\delta_1(t)^2\|{\dot U_1(t)}\|_F^2{\rm d}t\\
&\le&\int_0^{t^\star}\|{\Delta A U_1(t)}\|_F^2{\rm d}t\\
&\le&\int_0^{t^\star} \left(\|{\Delta AU_1}\|_F+\|{\Delta A}\|_2\|{U_1(t^\star)-U_1(0)}\|_F\right)^2{\rm d}t\\
&=&t^\ast  \left(\|{\Delta AU_1}\|_F+\|{\Delta A}\|_2\|{U_1(t^\star)-U_1(0)}\|_F\right)^2\\
&\leq&  \left(\|{\Delta AU_1}\|_F+\|{\Delta A}\|_2\|{U_1(t^\star)-U_1(0)}\|_F\right)^2,
\estar
which leads to
\begin{eqnarray}\label{c-2-18}
\|{U_1(t^\star)-U_1(0)}\|_F\leq \frac1 {\delta_{1,\min}-\|{\Delta A}\|_2}\|{\Delta AU_1}\|_F.
\end{eqnarray}
Hence by (\ref{c-2-17}) and (\ref{c-2-18}) we get
\begin{eqnarray}\label{c-2-19}
\|{\Delta A U_1(t)}\|_F\leq
\frac{\delta_{1,\min}}{\delta_{1,\min}-\|{\Delta A}\|_2}\|{\Delta AU_1}\|_F.
\end{eqnarray}
Then the bound (\ref{c-2-15}) follows from (\ref{c-2-16}), (\ref{c-2-19}) and the fact
\begin{eqnarray*}
&&\|{\rm Eig}^{\downarrow}(\wt\Lambda_1)-{\rm Eig}^{\downarrow}(\Lambda_1)\|_F
\leq \| \wt\Lambda_1-\Lambda_1\|_F.
\end{eqnarray*}
The proof is complete.
\end{proof}

\medskip

\begin{Corollary}\label{cor1}
Under the assumptions of Theorem \ref{them2}, we have
{\footnotesize \begin{eqnarray}\label{c-2-20}
\left(1-\frac{\|{\Delta A}\|_2}{\delta_{1,\min}}\right)^2\|{\rm Eig}^{\downarrow}(\wt \Lambda_1)-{\rm Eig}^{\downarrow}(\Lambda_1)\|_F^2+\left(\delta_{1,\min}-\|{\Delta A}\|_2\right)^2\|\sin\Theta(U_1,\wt U_1)\|_F^2\leq \|\Delta AU_1\|_F^2.
\end{eqnarray}}
In particular,
{\footnotesize\begin{eqnarray}\label{c-2-21}
\|\sin\Theta(U_1,\wt U_1)\|_F\leq \frac{1}{\delta_{1,\min}-\|{\Delta A}\|_2}\|\Delta AU_1\|_F.
\end{eqnarray}}
\end{Corollary}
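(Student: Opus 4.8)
The plan is to obtain the corollary directly from Theorem~\ref{them2} by bounding $\|\sin\Theta(U_1,\wt U_1)\|_F$ from above by $\|\wt U_1-U_1\|_F$. The underlying fact is the elementary norm identity already used in (\ref{esin}): since $U=[U_1,\ldots,U_k]$ is unitary, left multiplication by $U^H$ preserves the Frobenius norm, and partitioning $U^H(\wt U_1-U_1)$ into its $k$ blocks gives
\[
\|\wt U_1-U_1\|_F^2=\|U_1^H\wt U_1-I_{r_1}\|_F^2+\sum_{i=2}^k\|U_i^H\wt U_1\|_F^2\ \ge\ \sum_{i=2}^k\|U_i^H\wt U_1\|_F^2=\|\sin\Theta(U_1,\wt U_1)\|_F^2,
\]
the last equality being \cite[Chapter I, Theorem 5.5]{ss90}.

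Next I would feed this lower bound into (\ref{c-2-15}). The coefficient $(\delta_{1,\min}-\|\Delta A\|_2)^2$ multiplying $\|\wt U_1-U_1\|_F^2$ in (\ref{c-2-15}) is nonnegative (in fact strictly positive, by the standing hypothesis $\|\Delta A\|_2<\delta_{1,\min}$ of Theorem~\ref{them2}), so replacing $\|\wt U_1-U_1\|_F^2$ by the smaller quantity $\|\sin\Theta(U_1,\wt U_1)\|_F^2$ only weakens the left-hand side, which yields (\ref{c-2-20}) at once.

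Finally, for (\ref{c-2-21}) I would simply discard the nonnegative eigenvalue term $\bigl(1-\|\Delta A\|_2/\delta_{1,\min}\bigr)^2\|{\rm Eig}^{\downarrow}(\wt\Lambda_1)-{\rm Eig}^{\downarrow}(\Lambda_1)\|_F^2$ from the left of (\ref{c-2-20}), divide by $(\delta_{1,\min}-\|\Delta A\|_2)^2>0$, and take square roots. There is really no obstacle here: the corollary is an immediate consequence of Theorem~\ref{them2} together with the norm identity above, and the only point that must be respected throughout is the sign condition $\|\Delta A\|_2<\delta_{1,\min}$, which both keeps the coefficient in (\ref{c-2-20}) genuinely positive and makes the final division legitimate.
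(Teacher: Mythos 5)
Your argument is correct and is exactly the paper's proof: the authors also obtain (\ref{c-2-20}) by combining (\ref{c-2-15}) with the inequality (\ref{esin}) for $j=1$, and then derive (\ref{c-2-21}) by dropping the eigenvalue term. Your extra remarks about the positivity of the coefficient under $\|\Delta A\|_2<\delta_{1,\min}$ are a harmless elaboration of the same route.
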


\medskip

\begin{proof}
The bound (\ref{c-2-20}) is from (\ref{c-2-15}) and (\ref{esin}) (with $j=1$),
and (\ref{c-2-21}) follows from (\ref{c-2-20})
by dropping the eigenvalue error term.
\end{proof}

\medskip

\begin{Remark}
The inequalities (\ref{c-2-20}) and (\ref{c-2-21}) are similar to  (\ref{c-13}) and (\ref{c-122}), but
they require $\|\Delta A\|_2<\delta_{1,\min}$. In this sense the bounds (\ref{c-2-20}) and (\ref{c-2-21})
as well as (\ref{c-2-15}) are local. Therefore, it is not simple to compare these bounds with
 (\ref{c-13}) and (\ref{c-122}).  Following the discussions in Remark~\ref{rem24},
 a sufficient condition for $\|\Delta A\|_2<\delta_{1,\min}$ is  $\|\Delta A\|_2<\delta_1(0)/3.$
\end{Remark}

\medskip

\begin{Remark} Applying the Mean Value Theorem to the integral in (\ref{c-2-16}), we
have a simpler bound
\[
\|{\rm Eig}^{\downarrow}(\wt\Lambda_1)-{\rm Eig}^{\downarrow}(\Lambda_1)\|_F^2+\delta_{1,\min}^2\|{\wt U_1-U_1}\|_F^2
\leq\|{\Delta A U_1(t_0)}\|_F^2,
\]
for some $t_0\in [0,\,1]$.
\end{Remark}

\section{Combined bounds of singular values and singular subspaces}
\label{sec3} In this section we will derive combined perturbation bounds for singular subspaces and corresponding singular values of a general matrix.
The following bound will be needed for derivations.
\vskip 0.1in
\begin{Lemma} (\cite[Chapter IV, Theorem 4.11]{ss90})\label{lem31}
 Let $\wt B=B+\Delta B\in\mathbb{C}^{m\times n}$. Then
\begin{eqnarray}\label{c-3-0}
\|{\rm Sing^{\downarrow}}(\wt B)-{\rm Sing}^{\downarrow}(B)\|_F\leq \|\Delta B\|_F.
\end{eqnarray}
\end{Lemma}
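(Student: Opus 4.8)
The plan is to deduce the singular-value bound (\ref{c-3-0}) from the Hermitian eigenvalue bound (\ref{c-121}) by means of the Jordan--Wielandt embedding, so that no new analysis is needed. Since singular values and Frobenius norms are unchanged under conjugate transposition, I may assume $m\ge n$ (as is implicit in the definition of ${\rm Sing}^{\downarrow}$). Given $B,\Delta B\in\mathbb{C}^{m\times n}$, form the $(m+n)\times(m+n)$ Hermitian matrices
\[
\mathcal{B}=\begin{bmatrix}0&B\\ B^H&0\end{bmatrix},\qquad
\Delta\mathcal{B}=\begin{bmatrix}0&\Delta B\\ \Delta B^H&0\end{bmatrix},\qquad
\wt{\mathcal{B}}=\mathcal{B}+\Delta\mathcal{B}.
\]
It is classical that if the singular values of $B$ are $\sigma_1\ge\cdots\ge\sigma_n$, then the eigenvalues of $\mathcal{B}$, listed in nonincreasing order, are
\[
\sigma_1\ge\cdots\ge\sigma_n\ge\underbrace{0\ge\cdots\ge0}_{m-n}\ge-\sigma_n\ge\cdots\ge-\sigma_1,
\]
and similarly for $\wt{\mathcal{B}}$ with the singular values $\wt\sigma_1\ge\cdots\ge\wt\sigma_n$ of $\wt B$.

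Next I would apply (\ref{c-121}) to the pair $\mathcal{B},\wt{\mathcal{B}}$, obtaining $\|{\rm Eig}^{\downarrow}(\wt{\mathcal{B}})-{\rm Eig}^{\downarrow}(\mathcal{B})\|_F\le\|\Delta\mathcal{B}\|_F$, and then evaluate both sides. On the right, the block structure gives $\|\Delta\mathcal{B}\|_F^2=2\|\Delta B\|_F^2$ immediately. On the left, subtracting the two sorted diagonal matrices entrywise, the central zero block contributes nothing while the positive and the negative halves each contribute $\sum_{i=1}^n(\wt\sigma_i-\sigma_i)^2$, so the left-hand side squared equals $2\,\|{\rm Sing}^{\downarrow}(\wt B)-{\rm Sing}^{\downarrow}(B)\|_F^2$. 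Cancelling the common factor $2$ and taking square roots yields (\ref{c-3-0}).

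There is no genuine obstacle here; the only point needing a moment's care is the bookkeeping in the last step --- that the nonincreasing arrangement of the spectrum $\{\pm\sigma_i\}_{i=1}^n\cup\{0\}^{m-n}$ places the $\sigma_i$ in positions $1,\dots,n$ and the $-\sigma_i$ in positions $m+1,\dots,m+n$ for both $\mathcal{B}$ and $\wt{\mathcal{B}}$, so that the squared Frobenius distance between the two sorted eigenvalue tuples is exactly twice that between the two sorted singular-value tuples (this stays correct even if some $\sigma_i$ or $\wt\sigma_i$ vanish, since $\pm 0=0$). An alternative, in the spirit of Section~2, would be to construct an analytic SVD $B(t)=U(t)\Sigma(t)V(t)^H$ of $B(t)=B+t\Delta B$, differentiate, and run the integration argument of Theorem~\ref{them1}; but with (\ref{c-121}) already in hand, the embedding is the shortest route and needs no extra machinery.
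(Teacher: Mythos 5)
Your proposal is correct. Note, however, that the paper itself gives no proof of this lemma: it is quoted verbatim from Stewart and Sun \cite[Chapter IV, Theorem 4.11]{ss90} (Mirsky's theorem), exactly as the Hoffman--Wielandt type bound (\ref{c-121}) is quoted rather than proved. What you supply is the standard Jordan--Wielandt reduction: embedding $B$ and $\wt B$ into the Hermitian matrices $\begin{bmatrix}0&B\\ B^H&0\end{bmatrix}$ and $\begin{bmatrix}0&\wt B\\ \wt B^H&0\end{bmatrix}$, whose sorted spectra are $\{\pm\sigma_i\}\cup\{0\}^{m-n}$ and $\{\pm\wt\sigma_i\}\cup\{0\}^{m-n}$, and then applying (\ref{c-121}); both sides pick up a factor $2$ after squaring, which cancels. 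Your bookkeeping is sound: the $i$-th largest eigenvalue of the embedded matrix is $\sigma_i$ for $i\le n$, $0$ for $n<i\le m$, and $-\sigma_{m+n+1-i}$ for $i>m$, a formula unaffected by ties, so the squared Frobenius distance of the sorted eigenvalue tuples is exactly $2\sum_{i=1}^n(\wt\sigma_i-\sigma_i)^2$, and the reduction to the case $m\ge n$ by conjugate transposition is harmless since both singular values and Frobenius norms are preserved. So your argument is a legitimate, self-contained derivation of (\ref{c-3-0}) from (\ref{c-121}), consistent with the paper's practice of taking (\ref{c-121}) as known; it buys an explicit proof where the paper offers only a citation, at the cost of assuming the Hermitian Hoffman--Wielandt bound, which the paper also does not prove.
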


\medskip
Let $B,\Delta B\in\mathbb{C}^{m\times n}$ with $m\ge n$.
For any $t\in\mathbb{R}$, the matrix $B(t)=B+t\Delta B$ has an analytic factorization  (\cite{bunbmn91,de99})
\begin{eqnarray}
\label{gsvd}
B(t)=W(t)\mat{c}\Sigma(t)\\0\rix V(t)^H,\quad \Sigma(t)=\diag(\Sigma_1(t),\ldots,\Sigma_k(t)),
\end{eqnarray}
where $\Sigma_j(t)\in{\mathbb C}^{r_j\times r_j}$ for $j=1,\ldots,k$,
 are analytic but not necessarily diagonal, $r_1+\cdots+r_k=n$,
\begin{eqnarray}\label{bwv}
 W(t)=\left[ W_1(t),\ldots, W_k(t), W_{k+1}(t)\right]\in {\mathbb C}^{m\times m},\qquad V(t)=\left[V_1(t), \ldots, V_k(t)\right]\in {\mathbb C}^{n\times n}
\end{eqnarray}
are unitary and analytic
and $W_j(t)\in{\mathbb C}^{m\times r_j}, V_j(t)\in{\mathbb C}^{n\times r_j}$ for
$j=1,\ldots,k$, $W_{k+1}(t)\in{\mathbb C}^{m\times (m-n)}$.
The analytic factorization (\ref{gsvd}) is not unique. Similar to Lemma \ref{lem22}, we have the following results with special choices of $W(t)$ and $V(t)$.

\medskip

\begin{Lemma}\label{lem32}
Let $B(t)=B+t\Delta B\in\mathbb{C}^{m\times n}$ with $m\geq n$ and $t\in\mathbb{R}$.
Then $B(t)$ has the analytic decomposition (\ref{gsvd}) and (\ref{bwv}), and $W(t)$, $V(t)$
satisfy
\begin{eqnarray}\label{c-3-3}
W_j^H(t)\dot{W}_j(t)=0,\quad j=1,\ldots,k+1,\quad\mbox{and}\quad V_j(t)^H\dot{V}_j(t)=0,
\quad j=1,\ldots,k.
\end{eqnarray}
\end{Lemma}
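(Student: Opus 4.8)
The plan is to mimic the proof of Lemma~\ref{lem22}, using Lemma~\ref{lem21} as the key tool, but now applied to \emph{two} unitary analytic matrices, $W(t)$ and $V(t)$, with the extra wrinkle that their diagonal blocks are coupled through the factorization (\ref{gsvd}). First I would start from an arbitrary analytic factorization (\ref{gsvd})--(\ref{bwv}) of $B(t)$, which exists by \cite{bunbmn91,de99}. The idea is to post-multiply $W(t)$ and $V(t)$ by block-diagonal unitary analytic matrices. For the last block of $W(t)$ there is no coupling: by Lemma~\ref{lem21} applied to $W(t)$ alone (with $k+1$ blocks and the target $\Phi_{k+1}(t)=0$ on that block), we can absorb a unitary analytic factor $Q_{k+1}(t)$ into $W_{k+1}(t)$ so that $W_{k+1}^H\dot W_{k+1}=0$; since $W_{k+1}$ does not appear in the product $W(t)\mat{c}\Sigma(t)\\0\rix V(t)^H$, this modification leaves (\ref{gsvd}) intact.

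Next, for $j=1,\ldots,k$ the blocks are genuinely coupled: if we replace $W_j(t)$ by $W_j(t)Q_j(t)$ and $V_j(t)$ by $V_j(t)P_j(t)$, then to preserve (\ref{gsvd}) we must simultaneously replace $\Sigma_j(t)$ by $Q_j(t)^H\Sigma_j(t)P_j(t)$, which remains analytic for any unitary analytic $Q_j,P_j$. So the freedom is a \emph{pair} $(Q_j(t),P_j(t))$ of unitary analytic matrices, and we want to choose them so that both $\wh W_j^H\dot{\wh W}_j=0$ and $\wh V_j^H\dot{\wh V}_j=0$. Now here is the point: these two conditions are not independent once we fix $Q_j$, $P_j$ separately, but they \emph{are} achievable independently because $Q_j$ and $P_j$ are independent parameters. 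Concretely, apply Lemma~\ref{lem21} to $V(t)=[V_1(t),\ldots,V_k(t)]$ with all $\Phi_j(t)=0$ to obtain unitary analytic $P_1(t),\ldots,P_k(t)$ with $(V_jP_j)^H\frac{d}{dt}(V_jP_j)=0$; then separately apply Lemma~\ref{lem21} to the $m\times m$ matrix $W(t)=[W_1(t),\ldots,W_k(t),W_{k+1}(t)]$ with all $\Phi_j(t)=0$ to obtain unitary analytic $Q_1(t),\ldots,Q_{k+1}(t)$ with $(W_jQ_j)^H\frac{d}{dt}(W_jQ_j)=0$. Setting $\wh W(t)=W(t)\diag(Q_1(t),\ldots,Q_{k+1}(t))$, $\wh V(t)=V(t)\diag(P_1(t),\ldots,P_k(t))$, and $\wh\Sigma_j(t)=Q_j(t)^H\Sigma_j(t)P_j(t)$ gives an analytic factorization (\ref{gsvd}) satisfying (\ref{c-3-3}).

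The only genuine subtlety—and the step I expect to need the most care—is verifying that these two applications of Lemma~\ref{lem21} can be made \emph{consistently}, i.e.\ that modifying $V$ by $P_j$ does not force a re-examination of the $W$ side and vice versa. It does not: the block-diagonal right-multiplications on $W$ and $V$ are completely decoupled as operations on matrices, and the condition $\wh W_j^H\dot{\wh W}_j=0$ depends only on $W_j$ and $Q_j$, while $\wh V_j^H\dot{\wh V}_j=0$ depends only on $V_j$ and $P_j$; the matrix $\Sigma_j(t)$ simply transforms along for the ride and stays analytic. One should also note that Lemma~\ref{lem21} as stated requires the ambient matrix to be \emph{square} and unitary, which is satisfied by both $W(t)\in\mathbb{C}^{m\times m}$ (with block sizes $r_1,\ldots,r_k,m-n$) and $V(t)\in\mathbb{C}^{n\times n}$ (with block sizes $r_1,\ldots,r_k$), so the lemma applies verbatim to each. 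I would therefore write the proof in three short movements: (i) recall that (\ref{gsvd})--(\ref{bwv}) exists; (ii) record how block-diagonal right-multiplications on $W$ and $V$ act on the factorization, keeping $\Sigma(t)$ analytic; (iii) invoke Lemma~\ref{lem21} twice, once for $V$ and once for $W$, each time with all the prescribed skew-Hermitian blocks equal to zero, and conclude.
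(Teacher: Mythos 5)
Your proposal is correct and follows essentially the same route as the paper: start from an analytic factorization of $B(t)$, note that block-diagonal unitary analytic right factors on $W(t)$ and $V(t)$ only conjugate the $\Sigma_j(t)$ blocks (keeping the factorization and analyticity), and then invoke Lemma~\ref{lem21} with all $\Phi_j(t)=0$ for $W(t)$ (with $k+1$ blocks) and for $V(t)$ (with $k$ blocks) separately. Your added remarks on the decoupling of the two applications and on the harmlessness of modifying $W_{k+1}(t)$ are correct elaborations of what the paper leaves implicit.
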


\medskip

\begin{proof} Let $B(t)$ have an analytic decomposition (\ref{gsvd}) and (\ref{bwv}).
Then for any block diagonal unitary analytic matrices
\[
P(t)=\diag(P_1(t),\dots, P_k(t),P_{k+1}(t)),\quad
Q(t)=\diag(Q_1(t),\ldots,Q_k(t))
\]
with $P_j(t),Q_j(t)\in{\mathbb C}^{r_j\times r_j}$ for $j=1,\ldots,k$, and
$P_{k+1}(t)\in{\mathbb C}^{(m-n)\times (m-n)}$,
\[
B(t)=(W(t)P(t))\mat{ccc}P_1^H\Sigma_1(t)Q_1(t)&\ldots&0\\\vdots&\ddots&\vdots\\
0&\ldots&P_k(t)^H\Sigma_k(t)Q_k(t)\\0&\ldots&0\rix (V(t)Q(t))^H,
\]
is a factorization in the same form of (\ref{gsvd}).  Following
Lemma \ref{lem21}, we can show that there exist $P(t)$ and $Q(t)$ such that
for the new $W(t):=W(t)P(t)$ and $V(t):=V(t)Q(t)$ the conditions in (\ref{c-3-3}) are satisfied and
$B(t)$ still has an analytic decomposition (\ref{gsvd}) with the new
block diagonal matarix $\Sigma(t):=\diag(P_1(t),\ldots,P_k(t))^H\Sigma(t)Q(t)$.
\end{proof}

\medskip

\begin{Theorem}\label{them31}
Suppose that $B(t)=B+t\Delta B\in\mathbb{C}^{m\times n}$ with $
m\geq n$ and $t\in\mathbb{R}$ has the analytic decomposition (\ref{gsvd}) and (\ref{bwv})
with $W(t)$ and $V(t)$ satisfying (\ref{c-3-3}). Define
\begin{eqnarray*}
&&\rho_{ji}(t)=\min_{\sigma_1(t)\in\sigma(\Sigma_j(t)),\sigma_2(t)\in\sigma(\Sigma_i(t))}\{\left|\sigma_1(t)-\sigma_2(t)\right|\}=\rho_{ij}(t),
\qquad \rho_{j}(t)=\min_{i\ne j}\{\rho_{ji}(t)\},\\
&&
\sigma_{j,\min}(t)=\min \{\sigma(\Sigma_j(t))\},\quad\wh \rho_{j}(t)=\min\left\{\rho_{j}(t),\sigma_{j,\min}(t)\right\},\quad
\sigma_{\min}(t)=\min_{j}\{\sigma_{j,\min}(t)\}
\end{eqnarray*}
and
\begin{eqnarray*}
\rho_{j,\min}=\min_{0\le t\le 1}\rho_j(t),\qquad\wh \rho_{j,\min}=
\min_{0\le t\le 1}\wh\rho_j(t),\quad
\sigma_{\min}=\min_{0\le t\le 1}\sigma_{\min}(t).
\end{eqnarray*}
Let
\bstar
W(0)&=&\left[W_1(0),\ldots, W_{k+1}(0)\right]=:\left[W_1,\ldots, W_{k+1}\right]=W,\\
W(1)&=&\left[W_1(1),\ldots,W_{k+1}(1)\right]=:[\wt W_1,\ldots,\wt W_{k+1}]
=\wt W,\\
V(0)&=&[V_1(0),\ldots, V_{k}(0)]=:[ V_1, \ldots,V_{k}]=V,\\
V(1)&=&[V_1(1),\ldots ,V_{k}(1)]=:[\wt V_1,\ldots,\wt V_{k}]=\wt V,
\estar
and $B(1)=:\wt B$. Then
\begin{eqnarray}\label{c-3-4}
&&\|{{\rm Sing}^{\downarrow}(\wt B)-{\rm Sing}^{\downarrow}(B)}\|_F^2
+\sum_{j=1}^{k}\frac{\wh\rho_{j,\min}^2}2\|{\wt W_j-W_j}\|_F^2
+\frac{\sigma_{\min}^2}2\|{\wt W_{k+1}-W_{k+1}}\|_F^2\nonumber\\
&&\qquad\qquad\qquad\qquad\qquad\qquad\qquad\qquad\quad
+\sum_{j=1}^k\frac{\rho_{j,\min}^2}2\|{\wt V_j-V_j}\|_F^2
\leq\|{\Delta B}\|_F^2.
\end{eqnarray}
In particular, when $m=n$ we have
\begin{eqnarray}\label{c-3-4-1}
\|{{\rm Sing}^{\downarrow}(\wt B)-{\rm Sing}^{\downarrow}(B)}\|_F^2
+\sum_{j=1}^k\frac{\rho_{j,\min}^2}2\left(\|{\wt W_j-W_j}\|_F^2+\|{\wt V_j-V_j}\|_F^2\right)
\leq\|{\Delta B}\|_F^2.
\end{eqnarray}
\end{Theorem}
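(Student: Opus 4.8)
The plan is to mimic the proof of Theorem~\ref{them1}, replacing the spectral decomposition with the analytic SVD factorization (\ref{gsvd}) and keeping track of both the left factor $W(t)$ and the right factor $V(t)$. First I would differentiate (\ref{gsvd}) together with $W(t)^HW(t)=I$ and $V(t)^HV(t)=I$, then multiply the derivative identity on the left by $W(t)^H$ and on the right by $V(t)$. Using (\ref{c-3-3}) to kill the diagonal blocks of $W_j^H\dot W_j$ and $V_j^H\dot V_j$, this produces, after partitioning $W(t)^H\Delta B\, V(t)$ into blocks $\Delta B_{ij}(t)$, a system analogous to (\ref{c-2-5})--(\ref{c-2-6}): the diagonal blocks give $\dot\Sigma_j(t)=\Delta B_{jj}(t)$, the $(i,j)$ blocks with $1\le i,j\le k$, $i\ne j$, couple $W_i^H\dot W_j$ and $V_i^H\dot V_j$ through $\Sigma_i$ and $\Sigma_j$ (a Sylvester-type pairing $W_i^H\dot W_j\,\Sigma_j - \Sigma_i\, (V_i^H\dot V_j)^{?}$, to be sorted out by taking both the $(i,j)$ and $(j,i)$ blocks), and the blocks in the last block-row (row $k+1$) give $W_{k+1}^H\dot W_j\,\Sigma_j=\Delta B_{k+1,j}(t)$, which involves only $\Sigma_j$ with no companion singular value (this is where $\sigma_{j,\min}(t)$ enters rather than a gap).

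Next I would pass to scalar entries by inserting the (non-analytic) SVD's $\Sigma_j(t)=G_j(t)\diag(\sigma_{j,1}(t),\dots)H_j(t)^H$ for appropriate unitaries $G_j,H_j$, exactly as in (\ref{c-2-7})--(\ref{c-2-8}). For the pair of blocks $(i,j)$ and $(j,i)$ with $i\ne j$, the two coupled scalar equations in the unknowns $x_{pq}$ (from $W_i^H\dot W_j$) and $y_{pq}$ (from $V_i^H\dot V_j$) have the form $\sigma_{j,q}x - \sigma_{i,p}y = z_1$ and $-\sigma_{i,p}x + \sigma_{j,q}y = z_2$ (up to conjugation); a $2\times2$ estimate shows $|z_1|^2+|z_2|^2 \ge (\sigma_{j,q}-\sigma_{i,p})^2(|x|^2+|y|^2) \ge \rho_{ji}(t)^2(|x|^2+|y|^2)$, which after summing is where the factor $1/2$ in (\ref{c-3-4}) originates — the single off-diagonal Frobenius-norm block $\|\Delta B_{ij}(t)\|_F^2+\|\Delta B_{ji}(t)\|_F^2$ controls $\rho_{ji}(t)^2(\|W_i^H\dot W_j\|_F^2+\|V_i^H\dot V_j\|_F^2)$. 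For the last block-row, $\|\Delta B_{k+1,j}(t)\|_F^2\ge \sigma_{j,\min}(t)^2\|W_{k+1}^H\dot W_j\|_F^2$. Combining $\rho_j$ and $\sigma_{j,\min}$ via $\wh\rho_j(t)=\min\{\rho_j(t),\sigma_{j,\min}(t)\}$ lets me bound the $W_j$ contributions uniformly, and $\sigma_{\min}(t)$ handles $\dot W_{k+1}$ through $\|W_j^H\dot W_{k+1}\|_F$. Then, just as in (\ref{c-2-9}), orthogonality $W(t)^H\dot W_j=\dot W_j$ and $V(t)^H\dot V_j=\dot V_j$ let me reassemble $\sum_i\|W_i^H\dot W_j\|_F^2=\|\dot W_j\|_F^2$, etc., and assemble a pointwise inequality $\|\Delta B\|_F^2 \ge \|\dot\Sigma(t)\|_F^2 + \sum_j \frac{\wh\rho_{j,\min}^2}{2}\|\dot W_j\|_F^2 + \frac{\sigma_{\min}^2}{2}\|\dot W_{k+1}\|_F^2 + \sum_j \frac{\rho_{j,\min}^2}{2}\|\dot V_j\|_F^2$, having replaced $t$-dependent gaps by their minima over $[0,1]$ as in (\ref{c-2-2}).

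Finally I would integrate over $t\in[0,1]$, apply Jensen's inequality $\left|\int_0^1 f\,dt\right|^2\le\int_0^1|f|^2\,dt$ blockwise (as in (\ref{c-2-11})) to replace $\int_0^1\|\dot F(t)\|_F^2\,dt$ by $\|F(1)-F(0)\|_F^2$ for $F=\Sigma, W_j, W_{k+1}, V_j$, and then invoke Lemma~\ref{lem31} in the form $\|{\rm Sing}^\downarrow(\wt B)-{\rm Sing}^\downarrow(B)\|_F=\|{\rm Sing}^\downarrow(\Sigma(1))-{\rm Sing}^\downarrow(\Sigma(0))\|_F\le\|\Sigma(1)-\Sigma(0)\|_F$ to replace the $\Sigma$ term, giving (\ref{c-3-4}). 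The special case $m=n$ has no $W_{k+1}$ block, so the $\sigma_{j,\min}$ constraints on the $W_j$ disappear and $\wh\rho_{j,\min}$ can be replaced by $\rho_{j,\min}$, yielding (\ref{c-3-4-1}).

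The main obstacle I anticipate is the bookkeeping of the coupled left/right off-diagonal system: unlike the Hermitian case where each off-diagonal block gives one clean Sylvester equation, here $W_i^H\dot W_j$ and $V_i^H\dot V_j$ are entangled, so I must carefully combine the $(i,j)$ and $(j,i)$ blocks (watching for transposes/conjugates coming from $V(t)^H\dot V(t)=-\dot V(t)^HV(t)$ and the $m>n$ padding) to get a self-contained $2\times2$ system per scalar index pair; getting the constants right there — in particular justifying the $1/2$ and confirming that $\rho_{ji}$ rather than something smaller is the valid bound — is the delicate step. A secondary subtlety is treating the rectangular case cleanly: the $(m-n)$-dimensional block $W_{k+1}$ has no matching $V$-partner, and one must verify that its only constraint is through $\sigma_{\min}(t)$ and that the term $\|\Delta B_{k+1,j}(t)\|_F^2$ is not double-counted when regrouping the Frobenius norm of $\Delta B$.
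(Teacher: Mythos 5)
Your proposal follows essentially the same route as the paper's proof: differentiate the analytic factorization and the unitarity conditions, use (\ref{c-3-3}) to zero the diagonal blocks of $W^H\dot W$ and $V^H\dot V$, obtain $\dot\Sigma_j=\Delta B_{jj}$, the coupled $(i,j)/(j,i)$ off-diagonal system and the row-$(k+1)$ equations $W_{k+1,j}\Sigma_j=\Delta B_{k+1,j}$, pass to scalars via pointwise (non-analytic) SVDs of the $\Sigma_j(t)$, bound the resulting $2\times 2$ systems by $(\sigma_{i,p}-\sigma_{j,q})^2$, reassemble with the skew-symmetry-induced double counting (which is exactly the paper's factor $2\|\Delta B\|_F^2$ and hence the $1/2$'s), and finish by minimizing the gaps over $[0,1]$, integrating with Jensen, and invoking Lemma~\ref{lem31}. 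The bookkeeping points you flag (the conjugate/transpose structure of the paired off-diagonal equations and the role of $\sigma_{j,\min}$, $\sigma_{\min}$ for the $W_{k+1}$ block) resolve exactly as in the paper's equations (\ref{dsvd-3})--(\ref{c-3-10}), so the outline is correct as stated.
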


\medskip
\begin{proof}
By taking the derivation on both sides of (\ref{gsvd}), $W(t)^HW(t)=I$ and $V(t)^HV(t)=I$,
respectively, we obtain
\begin{eqnarray}\label{c-3-5}
\Delta B =\dot{W}(t)\mat{c}\Sigma(t)\\0\rix V(t)^H+W(t)\mat{c}\dot{\Sigma}(t)\\0\rix V(t)^H
+W(t)\mat{c}\Sigma(t)\\0\rix \dot{V}(t)^H
\end{eqnarray}
and
\begin{eqnarray}\label{c-3-6}
\dot W(t)^H W(t)=-W(t)^H\dot W(t),\qquad \dot V(t)^H V(t)=-V(t)^H\dot V(t).
\end{eqnarray}
 Using the second equality of (\ref{c-3-6}), one can rewrite (\ref{c-3-5}) as
\begin{eqnarray}\label{dsvd-1}\mbox{}\quad
W(t)^H\Delta BV(t) =W(t)^H\dot{W}(t)\mat{c}\Sigma(t)\\0\rix-\mat{c}\Sigma(t)\\0\rix V(t)^H\dot{V}(t)
+\mat{c}\dot{\Sigma}(t)\\0\rix.
\end{eqnarray}
Partition
$$
W(t)^H\dot{W}(t)=[W_{ij}(t)],\quad V(t)^H\dot{V}(t)=[V_{ij}(t)],\quad
 W(t)^H\Delta B V(t)=[\Delta B_{ij}(t)],
$$
where $W_{ij}(t)=W_i(t)^H\dot{W}_j(t)$, $V_{ij}(t)=V_i(t)^H\dot{V}_j(t)$, and
$\Delta B_{ij}(t)=W_i(t)^H\Delta BV_j(t)$.
From (\ref{c-3-3}) and (\ref{c-3-6}), we have
\begin{eqnarray}\label{skew}
W_{ij}(t)=-W_{ji}(t)^H,\quad
V_{ij}(t)=-V_{ji}(t)^H,\qquad \forall i\ne j
\end{eqnarray}
and
\begin{eqnarray}\label{skewd}
W_{ii}(t)=0,\qquad V_{ii}(t)=0,\quad \forall i.
\end{eqnarray}
Then (\ref{dsvd-1}) implies
\begin{eqnarray}\label{diag}
\dot{\Sigma}_j(t)=\Delta B_{jj}(t),\quad j=1,\ldots,k,
\end{eqnarray}
\begin{eqnarray}\label{dsvd-3}
\Sigma_i(t)^HW_{ij}(t)-V_{ij}(t)\Sigma_j(t)^H=-\Delta B_{ji}(t)^H,
\quad
W_{ij}(t)\Sigma_j(t)-\Sigma_i(t)V_{ij}(t)=\Delta B_{ij}(t)
\end{eqnarray}
for $1\le j<i\le k$, and
\begin{eqnarray}\label{w2}
W_{k+1,j}(t)\Sigma_j(t)=\Delta B_{k+1,j}(t),\quad  j=1,\ldots,k.
\end{eqnarray}
For each $j=1,\ldots,k$, let
\begin{eqnarray*}
\Sigma_j(t)=G_j(t)\wh\Sigma_j(t)F_j(t)^H
\end{eqnarray*}
be an SVD (not necessarily analytic) of $\Sigma_j(t)$, where
\begin{eqnarray*}
\wh \Sigma_j(t)=\diag(\sigma_{j,1}(t),\ldots,\sigma_{j,r_j}(t)),\qquad \sigma_{j,q}(t)\geq 0, \quad q=1,\ldots,r_j.
\end{eqnarray*}
Denote
\begin{eqnarray*}
&&\wh W_{ij}(t)=G_i(t)^HW_{ij}(t)G_j(t)=[w_{pq}^{(ij)}(t)],\quad
\wh V_{ij}(t)=F_i(t)^HV_{ij}(t)F_j(t)=[v_{pq}^{(ij)}(t)],\\
&&\wh B_{ji}(t)=G_j(t)^H\Delta B_{ji}(t)F_i(t)=[b_{pq}^{(ji)}(t)],\quad
\wh B_{ij}(t)=G_i(t)^H\Delta B_{ij}(t)F_j(t)=[c_{pq}^{(ij)}(t)],
\end{eqnarray*}
for $1\le j<i\le k$, and
\begin{eqnarray*}
&&\wh W_{k+1,j}(t)=W_{k+1,j}(t)G_j(t)=[x_{pq}^{(j)}(t)],\quad
\wh B_{k+1,j}(t)=\Delta B_{k+1,j}(t)F_j(t)=[f_{pq}^{(j)}(t)],
\end{eqnarray*}
for $j=1,\ldots,k$. From (\ref{dsvd-3}) and (\ref{w2}), one has
\[
\wh\Sigma_i(t)\wh W_{ij}(t)-\wh V_{ij}(t)\wh \Sigma_j(t)=-\wh B_{ji}(t)^H,\quad
\wh W_{ij}(t)\wh\Sigma_j(t)-\wh \Sigma_i(t)\wh V_{ij}(t)=\wh B_{ij}(t),
\]
for $1\le j<i\le k$  and
\[
\wh W_{k+1,j}(t)\wh\Sigma_j(t)=\wh B_{k+1,j}(t),\quad j=1,\ldots,k,
\]
which imply
\begin{eqnarray}\label{c-3-7}
-\overline{b_{qp}^{(ji)}(t)} = \sigma_{i,p}(t)w_{pq}^{(ij)}(t)-\sigma_{j,q}(t)v_{pq}^{(ij)}(t),\quad
c_{pq}^{(ij)}(t)=\sigma_{j,q}(t)w_{pq}^{(ij)}(t)-\sigma_{i,p}(t)v_{pq}^{(ij)}(t),
\end{eqnarray}
for  $p=1,\ldots,r_i,q=1,\ldots,r_j$, $1\le j<i\le k$,
 and
\begin{eqnarray*}
f_{pq}^{(j)}(t) = x_{pq}^{(j)}(t)\sigma_{j,q}(t),\quad p=1,\ldots,m-n,\quad
q=1,\ldots,r_j,\quad j=1,\ldots,k,
\end{eqnarray*}
and from which
\begin{eqnarray}\label{c-3-8}
\|\Delta B_{k+1,j}(t)\|_F^2\ge
\sigma_{j,\min}(t)^2\|W_{k+1,j}(t)\|_F^2.
\end{eqnarray}
By (\ref{c-3-7}), simple calculations yield
\begin{eqnarray*}
|b_{qp}^{(ji)}(t)|^2+|c_{pq}^{(ij)}(t)|^2
\geq (\sigma_{i,p}(t)-\sigma_{j,q}(t))^2(|w_{pq}^{(ij)}(t)|^2+|v_{pq}^{(ij)}(t)|^2),
\end{eqnarray*}
which implies
\begin{eqnarray}\label{c-3-9}
\|\Delta B_{ji}(t)\|_F^2+\|\Delta B_{ij}(t)\|_F^2\ge \rho_{ji}(t)^2(\|W_{ij}(t)\|_F^2+\|V_{ij}(t)\|_F^2),
\quad 1\le j<i\le k.
\end{eqnarray}
Then using (\ref{diag}), (\ref{c-3-8}), (\ref{c-3-9})
and $\rho_{ij}(t)=\rho_{ji}(t)$, we obtain
{\footnotesize\begin{eqnarray}\label{c-3-10}
2\|{\Delta B}\|_F^2&=&2\|{W(t)^H\Delta BV(t)}\|_F^2\nonumber\\
&=&
2\sum_{j=1}^k \|{\Delta B_{jj}(t)}\|_F^2+2\sum_{1\le j<i\le k}(\|{\Delta B_{ji}(t)}\|_F^2
+\|{\Delta B_{ij}(t)}\|_F^2)
+2\sum_{j=1}^k\|{\Delta B_{k+1,j}(t)}\|_F^2\nonumber\\
&\ge&2\sum_{j=1}^k\|{\dot\Sigma_j(t)}\|_F^2
+2\sum_{1\le j<i\le k}\rho_{ji}(t)^2(\|{W_{ij}(t)}\|_F^2+\|{V_{ij}(t)}\|_F^2)
+2\sum_{j=1}^{k}\sigma_{j,\min}(t)^2\|W_{k+1,j}(t)\|_F^2\nonumber\\
&=&2\|{\dot\Sigma(t)}\|_F^2+\sum_{j=1}^k\sum_{i=1}^k\rho_{ji}(t)^2(\|{W_{ij}(t)}\|_F^2+\|{V_{ij}(t)}\|_F^2)
+\sum_{j=1}^{k}\sigma_{j,\min}(t)^2(\|W_{k+1,j}(t)\|_F^2+\|W_{j,k+1}(t)\|_F^2)\nonumber\\
&=&2\|{\dot\Sigma(t)}\|_F^2
+\sum_{j=1}^k\left(\sum_{i=1}^k\rho_{ji}(t)^2\|{W_{ij}(t)}\|_F^2
+\sigma_{j,\min}(t)^2\|W_{k+1,j}(t)\|_F^2\right)\nonumber\\
&&\qquad\qquad\qquad+\sum_{j=1}^k\sum_{i=1}^k\rho_{ji}(t)^2\|{V_{ij}(t)}\|_F^2
+\sum_{j=1}^{k}\sigma_{j,\min}(t)^2\|W_{j,k+1}(t)\|_F^2\nonumber\\
&\ge&2\|{\dot\Sigma(t)}\|_F^2
+\sum_{j=1}^k\wh \rho_j(t)^2\|{\dot W_{j}(t)}\|_F^2+\sum_{j=1}^k\rho_j(t)^2
\|{\dot V_{j}(t)}\|_F^2+\sigma_{\min}(t)^2\|{\dot W_{k+1}(t)}\|_F^2\nonumber\\
&\ge&2\|{\dot\Sigma(t)}\|_F^2
+\sum_{j=1}^k\wh \rho_{j,\min}^2\|{\dot W_{j}(t)}\|_F^2+\sum_{j=1}^k\rho_{j,\min}^2
\|{\dot V_{j}(t)}\|_F^2+\sigma_{\min}^2\|{\dot W_{k+1}(t)}\|_F^2.
\end{eqnarray}}
Then taking the integral on both sides of (\ref{c-3-10}) on the internal $[0,~ 1]$ yields
\begin{eqnarray}\label{c-3-12}
2\|{\Delta B}\|_F^2&=&
\int_0^12\|{\Delta B}\|_F^2{\rm d}t \ge 2\int_0^1\|{\dot\Sigma(t)}\|_F^2{\rm d}t
+\sum_{j=1}^k\wh\rho_{j,\min}^2\int_0^1\|{\dot W_j(t)}\|_F^2{\rm d}t\nonumber\\
&&\qquad\qquad+\sigma_{\min}^2\int_0^1\|{\dot W_{k+1}(t)}\|_F^2{\rm d}t+
\sum_{j=1}^k\rho_{j,\min}^2\int_0^1\|{\dot V(t)}\|_F^2
{\rm d}t\nonumber\\
&\ge&2\|\Sigma(1)-\Sigma(0)\|_F^2
+\sum_{j=1}^k\wh\rho_{j,\min}^2\|{\wt W_j-W_j}\|_F^2+
\sigma_{\min}^2\|{\wt W_{k+1}-W_{k+1}}\|_F^2\nonumber\\
&&\qquad\qquad\qquad\qquad\qquad\qquad
+ \sum_{j=1}^k\rho_{j,\min}^2\|{\wt V_j-V_j}\|_F^2.
\end{eqnarray}
Since $\sigma(B)=\sigma(\Sigma(0))$ and $\sigma(\wt B)=\sigma(\Sigma(1))$, it follows from (\ref{c-3-0}) that
\begin{eqnarray*}
\|{\rm Sing}^{\downarrow}(\wt B)-{\rm Sing}^{\downarrow}(B)\|_F
=\|{\rm Sing}^{\downarrow}(\Sigma(1))-{\rm Sing}^{\downarrow}(\Sigma(0))\|_F
\leq \|\Sigma(1)-\Sigma(0)\|_F.
\end{eqnarray*}
Combining it with (\ref{c-3-12}) 
leads to (\ref{c-3-4}).

\medskip
When $m=n$, $W_{k+1,1}(t),\ldots,W_{k+1,k}(t)$ are void and (\ref{c-3-4-1}) is derived in the
same way.
\end{proof}

\medskip

\begin{Remark}\label{lem311}
The inequalities (\ref{c-3-4}) and (\ref{c-3-4-1}) bound the perturbations
of all the left and right singular subspaces $\mathcal R(W_j),\mathcal R(V_j)$,
$j=1,\ldots,k$, and the nullspace $\mathcal R(W_{k+1})$ of $B^H$ as well as
all the singular values.  Obviously, the bound (\ref{c-3-4}) is sharper than the one (\ref{c-3-0}).
\end{Remark}\vskip 2mm

\begin{Remark}\label{lem321}
When $k=n$ and $r_1=\cdots=r_n=1$, (\ref{gsvd}) may not necessarily an SVD, since
$\Sigma_1(t),\ldots,\Sigma_n(t)$ (which are scalars now) are not necessarily nonnegative.
Consequently, $W_1,\ldots,W_n$ and $V_1,\ldots,V_n$ are not necessarily
 left and right singular vectors of $B$. However, we may use a constant diagonal unitary
 matrix $P$ to rewrite (\ref{gsvd}) as $B(t)=W(t)\mat{c}\Sigma(t)P\\0\rix (V(t)P)^H$
 such that the diagonal entries of $\Sigma(0)P$ are nonnegative, i.e., they are the
 singular values of $B$. Then the columns of $W(0)$ and $V(0)P$ are the left and right
 singular vectors. Without loss of generality, we set $P=I$, then when $m>n$,
  (\ref{c-3-4}) becomes
 {\footnotesize\[
\|{{\rm Sing}^{\downarrow}(\wt B)-{\rm Sing}^{\downarrow}(B)}\|_F^2
+\sum_{j=1}^{n}\frac{\wh\rho_{j,\min}^2}2\|{\wt W_j-W_j}\|_F^2
+\frac{\sigma_{\min}^2}2\|{\wt W_{n+1}-W_{n+1}}\|_F^2
+\sum_{j=1}^n\frac{\rho_{j,\min}^2}2\|{\wt V_j-V_j}\|_F^2
\leq\|{\Delta B}\|_F^2.
\]}
 and when $m=n$, (\ref{c-3-4-1}) becomes
 \[
 \|{{\rm Sing}^{\downarrow}(\wt B)-{\rm Sing}^{\downarrow}(B)}\|_F^2
+\sum_{j=1}^{n}\frac{\rho_{j,\min}^2}2\left(\|{\wt W_j-W_j}\|_F^2+\|{\wt V_j-V_j}\|_F^2\right)
\leq\|{\Delta B}\|_F^2,
\]
and from which we have
 \[
\|{{\rm Sing}^{\downarrow}(\wt B)-{\rm Sing}^{\downarrow}(B)}\|_F^2
+\frac{\wh\rho_{\min}^2}2\|{\wt W-W}\|_F^2
+\frac{\rho_{\min}^2}2\|{\wt V-V}\|_F^2
\leq\|{\Delta B}\|_F^2,\qquad \mbox{when }m>n,
\]
 \[
 \|{{\rm Sing}^{\downarrow}(\wt B)-{\rm Sing}^{\downarrow}(B)}\|_F^2
+\frac{\rho_{\min}^2}2\left(\|{\wt W-W}\|_F^2+\|{\wt V-V}\|_F^2\right)
\leq\|{\Delta B}\|_F^2,\qquad \mbox{when }m=n,
\]
where $\wh\rho_{\min}=\min\{\sigma_{\min},\min_j\{\wh\rho_{j,\min}\}\}$ and
$\rho_{\min}=\min_j\{\rho_{j,\min}\}$.

If $k=1$ and $m=n$, (\ref{c-3-3}) implies that $\wt W=W$ and $\wt V=V$. In this case, (\ref{c-3-4-1}) reduces to (\ref{c-3-0}). If $k=1$ and $m>n$, (\ref{c-3-4}) is still sharper
than (\ref{c-3-0}), because there is one additional term $\frac{\sigma_{\min}^2}2\|\wt W_2-W_2\|_F^2$
on the left hand side of (\ref{c-3-4}).
\end{Remark}

\medskip

\begin{Remark}\label{rem331}
Similar to Remark \ref{rem23} in the previous section, the combined perturbation bounds (\ref{c-3-4}) and (\ref{c-3-4-1}) can be compared
with the corresponding results in \cite{li07}.
\end{Remark}\vskip 2mm

\begin{Remark}Using Theorem 4.13 in \cite[Chapter IV]{ss90}. we can obtain a sufficient condition for $\wh \rho_{j,min}>0$:
$$
\|\Delta B\|_2\leq \min\{\rho_j(0)/2,~\sigma_{j,\min}(0)\},
$$
where $\rho_j(0)$ is defined in Theorem \ref{them31}. Its proof is similar to the one in Remark \ref{rem24}.
\end{Remark}

\medskip

The following result gives a combined perturbation bound of a pair of left and right singular subspaces and the corresponding singular values.

\medskip

\begin{Theorem}\label{them32}
Under the assumptions of Theorem \ref{them31}, if $\|\Delta B\|_2<\wh \rho_{1,\min}$,  then
{\footnotesize \begin{eqnarray}\label{c-3-14-2}
&&2\|{{\rm Sing}^{\downarrow}(\wt\Sigma_1)-{\rm Sing}^{\downarrow}(\Sigma_1)}\|_F^2+\wh \rho_{1,\min}^2\|{\wt W_1-W_1}\|_F^2+
\rho_{1,\min}^2\|{\wt V_1-V_1}\|_F^2\nonumber\\
&\le&
\left(\frac{\wh \rho_{1,\min}}{\wh \rho_{1,\min}-\|{\Delta B}\|_2}\right)^2(\|{\Delta BV_1}\|_F^2+\|{W_1^H\Delta B}\|_F^2),
\end{eqnarray}}
where $\wt \Sigma_1=:\Sigma_1(1)$ and $\Sigma_1=:\Sigma_1(0)$.
When $m=n$, if  $\|\Delta B\|_2<\rho_{1,\min}$, we have
{\footnotesize\begin{eqnarray}\label{c-3-14-3}
&&2\left(1-\frac{\|\Delta B\|_2}{\rho_{1,\min}}\right)^2
\|{{\rm Sing}^{\downarrow}(\wt \Sigma_1)-{\rm Sing}^{\downarrow}(\Sigma_1)}\|_F^2+ (\rho_{1,\min}-\|\Delta B\|_2)^2\left(\|{\wt W_1- W_1}\|_F^2+
\|{\wt V_1-V_1}\|_F^2\right)\nonumber\\
& \le& \|{\Delta B V_1}\|_F^2+\|{W_1^H\Delta B}\|_F^2.
\end{eqnarray}}
\end{Theorem}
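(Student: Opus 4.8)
The plan is to adapt the argument of Theorem 2.4 to the ``one block'' setting, working simultaneously with the left and right singular subspaces corresponding to $\Sigma_1$. First I would isolate from the proof of Theorem~3.2 the local (pointwise in $t$) inequality that bounds $\|\Delta B V_1(t)\|_F^2 + \|W_1(t)^H \Delta B\|_F^2$ from below. Concretely, starting from \eqref{dsvd-1} and using \eqref{c-3-3}, \eqref{skew}, \eqref{skewd}, the block $W_1(t)^H\Delta B V_1(t)$ gives $\dot\Sigma_1(t)$ (by \eqref{diag}), while the blocks $W_i(t)^H\Delta B V_1(t)$ and $W_1(t)^H\Delta B V_i(t)$ for $i\neq 1$ (including $i=k+1$ on the $W$ side) carry the terms $W_{i1}(t),V_{i1}(t),W_{k+1,1}(t)$. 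Using \eqref{c-3-8}, \eqref{c-3-9} with the index $j=1$, together with the definitions of $\rho_1(t)$, $\sigma_{1,\min}(t)$ and $\wh\rho_1(t)$, I would obtain a pointwise bound of the shape
\begin{eqnarray*}
\|\Delta B V_1(t)\|_F^2 + \|W_1(t)^H \Delta B\|_F^2 \ \ge\ 2\|\dot\Sigma_1(t)\|_F^2 + \wh\rho_1(t)^2\|\dot W_1(t)\|_F^2 + \rho_1(t)^2\|\dot V_1(t)\|_F^2,
\end{eqnarray*}
exactly paralleling \eqref{1stc}. Replacing $\wh\rho_1(t),\rho_1(t)$ by their minima over $[0,1]$ and integrating, together with Jensen's inequality $\|\int_0^1 \dot F\,dt\|_F^2 \le \int_0^1\|\dot F\|_F^2\,dt$ and Lemma~3.1, yields the ``integral form'' analogue of \eqref{c-2-16}:
\begin{eqnarray*}
2\|\wt\Sigma_1 - \Sigma_1\|_F^2 + \wh\rho_{1,\min}^2\|\wt W_1 - W_1\|_F^2 + \rho_{1,\min}^2\|\wt V_1 - V_1\|_F^2 \ \le\ \int_0^1\left(\|\Delta B V_1(t)\|_F^2 + \|W_1(t)^H\Delta B\|_F^2\right)dt.
\end{eqnarray*}

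Next I would bound the integrand at an arbitrary $t$ by its value at $t=0$ plus a perturbation term, exactly as in \eqref{c-2-17}: writing $V_1(t)=V_1 + (V_1(t)-V_1(0))$ and $W_1(t)=W_1 + (W_1(t)-W_1(0))$, one gets $\|\Delta B V_1(t)\|_F \le \|\Delta B V_1\|_F + \|\Delta B\|_2\|V_1(t)-V_1(0)\|_F$ and similarly for the $W$-side. Introducing $t^\star$ where $\max_{0\le t\le 1}\big(\|V_1(t)-V_1(0)\|_F^2 + \|W_1(t)-W_1(0)\|_F^2\big)$ is attained, and applying the pointwise inequality above on $[0,t^\star]$ to the integrand $\delta$-weighted by $\wh\rho_1(t)^2$ and $\rho_1(t)^2$ (both $\ge \wh\rho_{1,\min}^2$), a self-referential estimate of the form
\begin{eqnarray*}
\wh\rho_{1,\min}^2\left(\|V_1(t^\star)-V_1\|_F^2 + \|W_1(t^\star)-W_1\|_F^2\right) \le \left(\|\Delta B V_1\|_F + \|W_1^H\Delta B\|_F + \|\Delta B\|_2\,\big(\|V_1(t^\star)-V_1\|_F + \|W_1(t^\star)-W_1\|_F\big)\right)^2
\end{eqnarray*}
emerges, which (using $\|\Delta B\|_2 < \wh\rho_{1,\min}$ and $a^2+b^2 \le (a+b)^2$) solves to give the bound $\|V_1(t^\star)-V_1\|_F + \|W_1(t^\star)-W_1\|_F \le (\wh\rho_{1,\min}-\|\Delta B\|_2)^{-1}(\|\Delta B V_1\|_F + \|W_1^H\Delta B\|_F)$, and hence $\|\Delta B V_1(t)\|_F^2 + \|W_1(t)^H\Delta B\|_F^2 \le \big(\tfrac{\wh\rho_{1,\min}}{\wh\rho_{1,\min}-\|\Delta B\|_2}\big)^2 (\|\Delta B V_1\|_F^2 + \|W_1^H\Delta B\|_F^2)$ for all $t\in[0,1]$. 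Feeding this into the integral inequality and using $\|{\rm Sing}^\downarrow(\wt\Sigma_1)-{\rm Sing}^\downarrow(\Sigma_1)\|_F \le \|\wt\Sigma_1-\Sigma_1\|_F$ (Lemma~3.1) gives \eqref{c-3-14-2}. The case $m=n$ is handled the same way, except that $W_{k+1}$ is void, so $\wh\rho_1(t)=\rho_1(t)$ throughout; carrying the factor $(1-\|\Delta B\|_2/\rho_{1,\min})^2$ on the singular-value term rather than absorbing it yields \eqref{c-3-14-3}.

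The main obstacle I anticipate is the bookkeeping in the self-referential step when both the $W$- and $V$-perturbations appear together: one must choose a single scalar quantity (here $\|V_1(t^\star)-V_1\|_F + \|W_1(t^\star)-W_1\|_F$, or alternatively the square root of the sum of squares) so that the quadratic inequality closes cleanly, and one must check that the mixed cross term $\|\Delta B\|_2$ multiplies exactly this quantity and nothing larger; the inequality $\sqrt{a^2+b^2}\le a+b$ for $a,b\ge 0$ is what makes the two sides compatible. A secondary point requiring care is verifying that the coefficient $\sigma_{1,\min}(t)^2$ attached to $\|W_{k+1,1}(t)\|_F^2$ in \eqref{c-3-8} is correctly absorbed into $\wh\rho_1(t)^2 \le \min\{\rho_1(t)^2,\sigma_{1,\min}(t)^2\}$ so that the single constant $\wh\rho_{1,\min}$ governs the whole $\dot W_1$ contribution; the rest is a routine repetition of the calculus estimates already used in Theorem~2.5.
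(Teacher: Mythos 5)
Your outline follows the paper's proof of Theorem~\ref{them32} step for step: the pointwise lower bound analogous to \eqref{1stc}/\eqref{ldtbd}, its integrated form, the bound on the integrand via a maximizer of the deviation, a self-referential quadratic inequality, and finally Lemma~\ref{lem31} to pass from $\|\wt\Sigma_1-\Sigma_1\|_F$ to the singular values. The one place where your plan, as written, would not deliver the stated constants is the closure of the self-referential step. You put $\wh\rho_{1,\min}^2\bigl(\alpha^2+\beta^2\bigr)$ on the left (with $\alpha=\|V_1(t^\star)-V_1\|_F$, $\beta=\|W_1(t^\star)-W_1\|_F$) but the plain sum $c_1+c_2+\|\Delta B\|_2(\alpha+\beta)$, $c_1=\|\Delta BV_1\|_F$, $c_2=\|W_1^H\Delta B\|_F$, inside the square on the right, and claim this solves to $\alpha+\beta\le (c_1+c_2)/(\wh\rho_{1,\min}-\|\Delta B\|_2)$. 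Taking square roots gives only $\wh\rho_{1,\min}\sqrt{\alpha^2+\beta^2}\le c_1+c_2+\|\Delta B\|_2(\alpha+\beta)$, and since $\alpha+\beta$ may be as large as $\sqrt2\,\sqrt{\alpha^2+\beta^2}$, solving this (in either the sum or the root-sum-of-squares variable) costs a factor $\sqrt2$: you would need the stronger hypothesis $\sqrt2\,\|\Delta B\|_2<\wh\rho_{1,\min}$, or you end up with a constant weaker than $\wh\rho_{1,\min}/(\wh\rho_{1,\min}-\|\Delta B\|_2)$ in \eqref{c-3-14-2}. So the inequality $\sqrt{a^2+b^2}\le a+b$ is not the device that makes the two sides compatible; it is precisely what spoils the constant.

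The fix is the alternative you mention only in passing: keep $\sqrt{\alpha^2+\beta^2}$ on both sides. Expanding $(c_1+\|\Delta B\|_2\alpha)^2+(c_2+\|\Delta B\|_2\beta)^2$ and applying Cauchy--Schwarz to the cross term, $c_1\alpha+c_2\beta\le\sqrt{c_1^2+c_2^2}\,\sqrt{\alpha^2+\beta^2}$, gives the upper bound $\bigl(\sqrt{c_1^2+c_2^2}+\|\Delta B\|_2\sqrt{\alpha^2+\beta^2}\bigr)^2$, and the quadratic inequality then closes cleanly to $\sqrt{\alpha^2+\beta^2}\le\sqrt{c_1^2+c_2^2}\,/\,(\wh\rho_{1,\min}-\|\Delta B\|_2)$, which is exactly what is needed for \eqref{c-3-19} and hence \eqref{c-3-14-2}. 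This is what the paper's proof does (it also uses $\wh\rho_{1,\min}\le\rho_{1,\min}$ to dominate $\wh\rho_{1,\min}^2(\alpha^2+\beta^2)$ by the weighted sum $\rho_{1,\min}^2\alpha^2+\wh\rho_{1,\min}^2\beta^2$ before invoking the integral chain); the only cosmetic difference from your sketch is that the paper takes two separate maximizers $t^\star$ and $t^{\star\star}$ for the $V$- and $W$-deviations, whereas your single maximizer of $\|V_1(t)-V_1\|_F^2+\|W_1(t)-W_1\|_F^2$ works just as well once the Cauchy--Schwarz closure is used. The remainder of your outline, including the $m=n$ case where the $W_{k+1}$ block is void and $\rho_{1,\min}$ replaces $\wh\rho_{1,\min}$, matches the paper.
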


\medskip

\begin{proof} Similar to the case for deriving (\ref{c-3-10}), we have
\begin{eqnarray}
\nonumber
&&\|{\Delta BV_1(t)}\|_F^2+
\|{W_1(t)^H\Delta B}\|_F^2
=\|{W(t)^H\Delta BV_1(t)}\|_F^2+
\|{W_1(t)^H\Delta BV(t)}\|_F^2\nonumber\\
\nonumber
&&\qquad=2\|{\Delta B_{11}(t)}\|_F^2
+\sum_{i=2}^k(\|{\Delta B_{i1}(t)}\|_F^2+\|{\Delta B_{1i}(t)}\|_F^2)
+\|{\Delta B_{k+1,1}(t)}\|_F^2\\
\nonumber
&&\qquad\ge2\|{\dot\Sigma_1(t)}\|_F^2+
\sum_{i=2}^k\rho_{1i}(t)^2(\|{W_{i1}(t)}\|_F^2+\|{V_{i1}(t)}\|_F^2)
+\sigma_{1,\min}(t)^2\|{W_{k+1,1}(t)}\|_F^2\\
\nonumber
&&\qquad\ge2\|{\dot\Sigma_1(t)}\|_F^2+\wh\rho_{1,\min}^2(\|{W(t)^H\dot W_{1}(t)}\|_F^2
+\rho_{1,\min}^2\|{V(t)^H\dot V_{1}(t)}\|_F^2\\
\label{ldtbd}
&&\qquad\ge 2\|{\dot\Sigma_1(t)}\|_F^2+\wh\rho_{1,\min}^2\|{\dot W_{1}(t)}\|_F^2+
\rho_{1,\min}^2\|{\dot V_{1}(t)}\|_F^2.
\end{eqnarray}
For any $t\in [0,~1]$ we have
\begin{eqnarray}\label{c-3-15}
\|{\Delta BV_1(t)}\|_F&=&\|{\Delta BV_1+\Delta B(V_1(t)-V_1(0))}\|_F\nonumber\\
&\le& \|{\Delta BV_1}\|_F+\|{\Delta B}\|_2\|{V_1(t)-V_1(0)}\|_F\nonumber\\
&\le& \|{\Delta BV_1}\|_F+\|{\Delta B}\|_2\|{V_1(t^\star)-V_1(0)}\|_F
\end{eqnarray}
and
\begin{eqnarray}\label{c-3-16}
\|{W_1(t)^H\Delta B}\|_F&=&\|{W_1^H\Delta B+(W_1(t)-W_1(0))^H\Delta B}\|_F\nonumber\\
&\le& \|{W_1^H\Delta B}\|_F+\|{\Delta B}\|_2\|{W_1(t)-W_1(0)}\|_F\nonumber\\
&\le& \|{W_1^H\Delta B}\|_F+\|{\Delta B}\|_2\|{W_1(t^{\star\star})-W_1(0)}\|_F,
\end{eqnarray}
where $t^\star,t^{\star\star}\in[0,\,1]$ satisfying
\begin{eqnarray*}
\|{V_1(t^\star)-V_1(0)}\|_F&=&\max_{0\le t\le 1}\|{V_1(t)-V_1(0)}\|_F=:\alpha,\\
\|{W_1(t^{\star\star})-W_1(0)}\|_F&=&\max_{0\le t\le 1}\|{W_1(t)-W_1(0)}\|_F=:\beta.
\end{eqnarray*}
Then it follows from (\ref{c-3-15}) and (\ref{c-3-16}) that
\begin{eqnarray}
\nonumber
&&\quad\|{\Delta BV_1(t)}\|_F^2+\|{W_1(t)^H\Delta B}\|_F^2\\
&&\le (\|{\Delta BV_1}\|_F+\alpha\|{\Delta B}\|_2)^2
+(\|{W_1^H\Delta B}\|_F+\beta\|{\Delta B}\|_2)^2\nonumber\\
\nonumber
&&\le \|{\Delta BV_1}\|_F^2+\|{W_1^H\Delta B}\|_F^2+2\|{\Delta B}\|_2
(\alpha\|{\Delta BV_1}\|_F+\beta \|{W_1^H\Delta B}\|_F)+\|{\Delta B}\|_2^2(\alpha^2+\beta^2)\\
\nonumber
&&\le \|{\Delta BV_1}\|_F^2+\|{W_1^H\Delta B}\|_F^2+2\|{\Delta B}\|_2
\sqrt{\|{\Delta BV_1}\|_F^2+\|{W_1^H\Delta B}\|_F^2}\sqrt{\alpha^2+\beta^2}+\|{\Delta B}\|_2^2(\alpha^2+\beta^2)\\
\label{udtbd}
&&=\left(\sqrt{\|{\Delta BV_1}\|_F^2+\|{W_1^H\Delta B}\|_F^2}
+\|{\Delta B}\|_2\sqrt{\alpha^2+\beta^2}\right)^2.
\end{eqnarray}
Using (\ref{ldtbd}), (\ref{udtbd}) and $\wh\rho_{1,\min}\le \rho_{1,\min}$, we obtain
\begin{eqnarray*}
\wh\rho_{1,\min}^2(\alpha^2+\beta^2)&\le& \rho_{1,\min}^2\alpha^2+\wh\rho^2_{1,\min}\beta^2\nonumber\\
&=&\wh\rho_{1,\min}^2\|{W_1(t^{\star\star})-W_1(0)}\|_F^2
+\rho_{1,\min}^2\|{V_1(t^\star)-V_1(0)}\|_F^2\nonumber\\
&=&\wh\rho_{1,\min}^2\left\|\int_0^{t^{\star\star}}\dot W_1(t){\rm d}t\right\|_F^2
+\rho_{1,\min}^2\left\|\int_0^{t^\star}\dot V_1(t){\rm d}t\right\|_F^2\nonumber\\
&\le&\wh\rho_{1,\min}^2\int_0^{t^{\star\star}}\|{\dot W_1(t)}\|_F^2{\rm d}t
+\rho_{1,\min}^2\int_0^{t^{\star}}\|{\dot V_1(t)}\|_F^2{\rm d}t\nonumber\\
&\le&\int_0^{1}(\wh\rho_{1,\min}^2\|{\dot W_1(t)}\|_F^2
+\rho_{1,\min}^2\|{\dot V_1(t)}\|_F^2){\rm d}t\nonumber\\
&\le&\int_0^{1}(\|{\Delta BV_1(t)}\|_F^2+\|{W_1(t)^H\Delta B}\|_F^2){\rm d}t \nonumber\\
&\le&\left(\sqrt{\|{\Delta BV_1}\|_F^2+\|{W_1^H\Delta B}\|_F^2}
+\|{\Delta B}\|_2\sqrt{\alpha^2+\beta^2}\right)^2.
\end{eqnarray*}
By taking the square root on both sides of the inequality, simple calculations yield
\begin{eqnarray}\label{c-3-18}
\sqrt{\alpha^2+\beta^2}\le \frac{\sqrt{\|{\Delta BV_1}\|_F^2+\|{W_1^H\Delta B}\|_F^2}}
{\wh\rho_{1,\min}-\|{\Delta B}\|_2}.
\end{eqnarray}
By (\ref{udtbd}) and (\ref{c-3-18}), we get
\begin{eqnarray}\label{c-3-19}
\|{\Delta BV_1(t)}\|_F^2+
\|{W_1(t)^H\Delta B}\|_F^2\le\left(\frac{\wh \rho_{1,\min}}{\wh \rho_{1,\min}-\|{\Delta B}\|_2}\right)^2(\|{\Delta BV_1}\|_F^2+\|{W_1^H\Delta B}\|_F^2).
\end{eqnarray}
Then by using (\ref{ldtbd}) and (\ref{c-3-19}), we have
\eqn
\nonumber
&&2\|{\wt\Sigma_1-\Sigma_1}\|_F^2+\wh \rho_{1,\min}^2\|{\wt W_1-\wt W_1}\|_F^2+
\rho_{1,\min}^2\|{\wt V_1-V_1}\|_F^2\\
\nonumber
&&\qquad\qquad \le \int_0^1\left(2\|{\dot\Sigma_1(t)}\|_F^2+\wh\rho_{1,\min}^2\|{\dot W_{1}(t)}\|_F^2+
\rho_{1,\min}^2\|{\dot V_{1}(t)}\|_F^2\right){\rm d}t\\
\label{lint}
&&\qquad\qquad\le\int_0^1\left(\|{\Delta BV_1(t)}\|_F^2+
\|{W_1(t)^H\Delta B}\|_F^2\right){\rm d}t \\
\nonumber
&&\qquad\qquad\le \left(\frac{\wh \rho_{1,\min}}{\wh \rho_{1,\min}-\|{\Delta B}\|_2}\right)^2
(\|{\Delta BV_1}\|_F^2+\|{W_1^H\Delta B}\|_F^2),
\enn
which leads to (\ref{c-3-14-2}) by using the bound (\ref{c-3-0}).

When $m=n$, one has $\wh \rho_{1,\min}=\rho_{1,\min}$, and the bound (\ref{c-3-14-2}) reduces to  (\ref{c-3-14-3}).
\end{proof}

\medskip

\begin{Corollary}\label{cor31}
Under the assumptions of Theorem \ref{them32}, we have
{\footnotesize\begin{eqnarray*}
&&2\|{{\rm Sing}^{\downarrow}(\wt\Sigma_1)-{\rm Sing}^{\downarrow}(\Sigma_1)}\|_F^2+\wh \rho_{1,\min}^2\|{\sin\Theta(W_1,\wt W_1)}\|_F^2+
\rho_{1,\min}^2\|{\sin\Theta(V_1, \wt V_1}\|_F^2\nonumber\\
& \le&
\left(\frac{\wh \rho_{1,\min}}{\wh \rho_{1,\min}-\|{\Delta B}\|_2}\right)^2\left(\|{\Delta BV_1}\|_F^2+\|{W_1^H\Delta B}\|_F^2\right).
\end{eqnarray*}}
When $m=n$,  we have
{\footnotesize\begin{eqnarray*}
 &&2\left(1-\frac{\|{\Delta B}\|_2}{\rho_{1,\min}}\right)^2
\|{{\rm Sing}^{\downarrow}(\wt\Sigma_1)-{\rm Sing}^{\downarrow}(\Sigma_1)}\|_F^2+(\rho_{1,\min}-\|\Delta B\|_2)^2\left(\|{\sin\Theta(W_1,\wt W_1)}\|_F^2+
\|{\sin\Theta(V_1, \wt V_1)}\|_F^2\right)\\
& \le&
\|{\Delta B V_1}\|_F^2+\|{W_1^H\Delta B}\|_F^2.
\end{eqnarray*}}
\end{Corollary}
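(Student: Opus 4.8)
The plan is to obtain Corollary~\ref{cor31} directly from Theorem~\ref{them32}, simply by replacing the orthonormal-basis-difference terms $\|\wt W_1-W_1\|_F$ and $\|\wt V_1-V_1\|_F$ on the left-hand sides of (\ref{c-3-14-2}) and (\ref{c-3-14-3}) by the smaller canonical-angle quantities $\|\sin\Theta(W_1,\wt W_1)\|_F$ and $\|\sin\Theta(V_1,\wt V_1)\|_F$. First I would record the elementary identity used already in (\ref{esin}), but now for the left singular subspaces: since $W=[W_1,\ldots,W_k,W_{k+1}]$ is $m\times m$ unitary,
\[
\|\wt W_1-W_1\|_F^2=\|W^H(\wt W_1-W_1)\|_F^2=\|W_1^H\wt W_1-I\|_F^2+\sum_{i=2}^{k+1}\|W_i^H\wt W_1\|_F^2.
\]
Dropping the nonnegative first term and recalling the Stewart--Sun characterization $\|\sin\Theta(W_1,\wt W_1)\|_F^2=\|(I-W_1W_1^H)\wt W_1\|_F^2=\sum_{i=2}^{k+1}\|W_i^H\wt W_1\|_F^2$ (here the sum includes the nullspace block $W_{k+1}$), this gives $\|\wt W_1-W_1\|_F\ge\|\sin\Theta(W_1,\wt W_1)\|_F$. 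The analogous computation with $V=[V_1,\ldots,V_k]$ (which has only $k$ blocks) yields $\|\wt V_1-V_1\|_F\ge\|\sin\Theta(V_1,\wt V_1)\|_F$.

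Next I would substitute these two inequalities into (\ref{c-3-14-2}). Because the coefficients $\wh\rho_{1,\min}^2$ and $\rho_{1,\min}^2$ multiplying the basis terms are nonnegative, decreasing those terms only decreases the left-hand side, so the inequality remains valid and becomes exactly the first displayed estimate of the corollary. Likewise, substituting into (\ref{c-3-14-3}) — which holds under the hypothesis $\|\Delta B\|_2<\rho_{1,\min}$ in the case $m=n$, where moreover $\wh\rho_{1,\min}=\rho_{1,\min}$ so that Theorem~\ref{them32} is applicable in the stated form — produces the second displayed estimate.

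There is essentially no genuine obstacle here; this is a corollary in the literal sense. The only points needing a little care are the bookkeeping of the block structure of $W$ (so that the sum defining $\|\sin\Theta(W_1,\wt W_1)\|_F^2$ correctly runs over $i=2,\ldots,k+1$ and not just $i=2,\ldots,k$, the nullspace block contributing to the perturbation of $\mathcal R(W_1)$), and noting that in the square case $m=n$ the block $W_{k+1}$ is void, so $\wh\rho_{1,\min}=\rho_{1,\min}$ and the hypothesis reduces to $\|\Delta B\|_2<\rho_{1,\min}$ as stated.
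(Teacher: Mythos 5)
Your proposal is correct and is essentially the paper's own argument: the paper proves Corollary~\ref{cor31} exactly as it proves Corollary~\ref{cor1}, namely by inserting the lower bounds $\|\wt W_1-W_1\|_F\ge\|\sin\Theta(W_1,\wt W_1)\|_F$ and $\|\wt V_1-V_1\|_F\ge\|\sin\Theta(V_1,\wt V_1)\|_F$ (the analogue of (\ref{esin})) into the bounds (\ref{c-3-14-2}) and (\ref{c-3-14-3}) of Theorem~\ref{them32}. Your explicit bookkeeping that the block sum for $W$ runs over $i=2,\ldots,k+1$ and that $W_{k+1}$ is void with $\wh\rho_{1,\min}=\rho_{1,\min}$ when $m=n$ matches the paper's setting.
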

\begin{proof}
It can be proved in the same way as that for Corollary~\ref{cor1}
\end{proof}\vskip 2mm

\begin{Remark}\label{rem351} The proof follows by similar discussions to Remark 2.2.
In Theorem \ref{them32} and Corollary \ref{cor31}, our combined bounds require $\|\Delta B\|_2<\wh\rho_{1,\min}$ or $\|\Delta B\|_2<\rho_{1,\min}$ (when $m=n$). Hence these bounds are local. A sufficient condition for $\|\Delta B\|_2<\wh\rho_{1,\min}$ is
$$\|\Delta B\|_2<\min\{\rho_1(0)/3,~\sigma_{1,\min}(0)/2\}$$
and a sufficient condition for $\|\Delta B\|_2<\rho_{1,\min}$ is
$\rho_1(0)>3\|\Delta B\|_2$.
\end{Remark}
\vskip 2mm

\begin{Remark}\label{rem361}
Applying the Mean Value Theorem to the second integral in \eqref{lint}, we have the following
simpler bounds,
\bstar
&& 2\|{\rm Sing}^{\downarrow}(\wt\Sigma_1)-{\rm Sing}^{\downarrow}(\Sigma_1)\|_F^2+\wh \rho_{1,\min}^2\|{\wt W_1-\wt W_1}\|_F^2+
\rho_{1,\min}^2\|{\wt V_1-V_1}\|_F^2\\
&\le&\|{\Delta BV_1(t_0)}\|_F^2+\|{W_1(t_0)^H\Delta B}\|_F^2,\qquad\qquad\qquad\qquad\qquad\qquad\qquad\quad\quad\mbox{when } m>n;\\
&& 2\|{\rm Sing}^{\downarrow}(\wt\Sigma_1)-{\rm Sing}^{\downarrow}(\Sigma_1)\|_F^2+ \rho_{1,\min}^2(\|{\wt W_1-\wt W_1}\|_F^2+
\|{\wt V_1-V_1}\|_F^2)\\
&\le&\|{\Delta BV_1(t_0)}\|_F^2+\|{W_1(t_0)^H\Delta B}\|_F^2,\qquad\qquad\qquad\qquad\qquad\qquad\qquad\quad\quad\mbox{when } m=n,
\estar
for some $t_0\in [0,\,1]$.
\end{Remark}

\medskip

\section{Conclusion}
By using a specific analytic decomposition, we obtain a combined bound for perturbations of the  eigenspaces of a Hermitian matrix that form a direct sum of the entire vector space
and all the eigenvalues. Combined bounds for a single eigenspace and its corresponding eigenvalues are also provided. The bounds are similar to the existing ones
in \cite{li07} but potentially sharper.   Elementary and simple calculus tools are employed for deriving the bounds. The same types of combined perturbation bounds are also derived for the left and right singular subspaces and singular values of a general matrix.


\end{document}